\documentclass[12pt]{amsart}
\usepackage{amssymb}
\usepackage[all]{xy}
\usepackage{fullpage}
\allowdisplaybreaks

\numberwithin{equation}{section}

\theoremstyle{plain}
\newtheorem{theorem}[equation]{Theorem}
\newtheorem{lemma}[equation]{Lemma}
\newtheorem{prop}[equation]{Proposition}
\newtheorem{cor}[equation]{Corollary}

\theoremstyle{definition}
\newtheorem{defn}[equation]{Definition}

\theoremstyle{remark} 
\newtheorem{remark}[equation]{Remark} 
\newtheorem{rk}[equation]{Remark}

\newcommand{\bF}{\mathbb{F}}
\newcommand{\fA}{\mathfrak{A}}
\newcommand{\fM}{\mathfrak{M}}

\newcommand{\Aut}{\mathsf{Aut}}
\newcommand{\Ext}{\mathsf{Ext}}
\renewcommand{\ge}{\geqslant}
\newcommand{\Hom}{\mathsf{Hom}}

\renewcommand{\le}{\leqslant}
\newcommand{\Mat}{\mathsf{Mat}}
\newcommand{\CO}{\mathcal{O}}
\newcommand{\ua}{\uparrow}
\newcommand{\da}{\downarrow}
\newcommand{\bZ}{\mathbb Z}

\author{David Benson, Radha Kessar, and Markus Linckelmann}

\address{David Benson \\ 
Institute of Mathematics\\ 
Fraser Noble Building\\
University of Aberdeen\\ 
King's College\\ 
Aberdeen AB24 3UE\\ 
United Kingdom}

\address{Radha Kessar and Markus Linckelmann \\
School of Mathematics, Computer Science \& Engineering \\
Department of Mathematics \\
City, University of London \\
Northampton Square \\
London EC1V 0HB \\
United Kingdom}

\subjclass[2010]{20C20, 20J06}

\keywords{Finite groups, block theory,  abelian defect, Frobenius twist}

\title{Blocks with normal abelian defect and abelian $p'$ inertial quotient}

\begin{document}

\begin{abstract}
Let $k$ be an algebraically closed field of characteristic $p$,
and let $\CO$ be either $k$ or its ring of Witt vectors $W(k)$.
Let $G$ a finite group and $B$ a block of $\CO G$ with normal
abelian defect group and abelian $p'$ inertial quotient.
We show that $B$ is isomorphic to its second Frobenius
twist. This is motivated by the fact that bounding Frobenius
numbers is one of the key steps towards Donovan's conjecture.
For $\CO=k$, we give an explicit description of the basic algebra
of $B$ as a quiver with relations. It is a quantised version
of the group algebra of the semidirect product $P\rtimes L$.

\end{abstract}

\maketitle

\section{Introduction}

The purpose of this paper is to bound the Frobenius numbers and to
give a structure theorem for blocks of finite groups with normal 
abelian defect groups and abelian $p'$ inertial quotients. This 
extends the results of Benson and Green~\cite{Benson/Green:2004a}, 
Holloway and Kessar~\cite{Holloway/Kessar:2005a}, 
Benson and Kessar~\cite{Benson/Kessar:2007a}.

We show that these blocks are isomorphic to their second Frobenius 
twist. By \cite{Kessar:2004}, bounding Frobenius numbers is a key step
towards Donovan's conjecture; see for instance \cite{EatonLivesey:2018a},
\cite{EatonLivesey:2018b}. We obtain further a complete description of 
the basic algebra of such a block over a field by means of quiver with 
relations.  

Our main theorems are as as follows. 
Let $k$ be an algebraically closed field of characteristic $p$ and let 
$W(k)$ be the ring of Witt vectors over $k$. Let $\CO\in$ $\{k, W(k)\}$. 
For $q$ a power of $p$, the Frobenius automorphism $\lambda\mapsto\lambda^q$
of the field $k$ lifts uniquely to an automorphism of the ring $W(k)$, and
we denote its inverse in both cases by $\mu\mapsto$ $\mu^{\frac{1}{q}}$.

Recall from ~\cite{Benson/Kessar:2007a} that for an $\CO$-algebra 
$A$, the Frobenius twist $A^{(q)}$ is the $\CO$-algebra which equals $A$ 
as a ring, and where scalar multiplication is twisted  via the  
Frobenius map; that is, for $\lambda \in \CO$, and  $a\in A$, the  
action on  $A^{(q)}$ is given by $\lambda  \cdot a =$
$\lambda ^{\frac{1}{q}} a$.

\begin{theorem}  \label{th:Fr}  
Let $P$ be a finite abelian $p$-group, $L$ an abelian $p'$-subgroup of   
$\Aut(P)$ and  $\alpha \in H^2(L, \CO^{\times}) $.  The twisted group   
algebra $\CO_{\alpha} (P \rtimes L)$ is  isomorphic to its second 
Frobenius twist  $\CO_{\alpha} (P \rtimes L)^{(p^2)}$.  
\end{theorem}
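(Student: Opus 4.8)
The plan is to compare the second Frobenius twist of $\CO_\alpha(P\rtimes L)$ with a twist of the defining cocycle, and then to undo that cocycle twist by means of a group automorphism of $P\rtimes L$ built from the $p$-power map on $L$.

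\emph{Step 1: Frobenius twisting is twisting the cocycle.} First I would record that for any finite group $H$, any cocycle $\beta\in Z^2(H,\CO^\times)$, and any power $q$ of $p$, the identity on the standard basis $\{u_h\}$ of $\CO_\beta H$ (so $u_gu_h=\beta(g,h)u_{gh}$) induces an isomorphism of $\CO$-algebras $\CO_{\beta^q}H\cong(\CO_\beta H)^{(q)}$, where $\beta^q$ is the cocycle $(g,h)\mapsto\beta(g,h)^q$; this follows at once from the definition of the twist, the only point being that $u_g^{\beta^q}\mapsto u_g^\beta$ becomes multiplicative once the scalars are untwisted. Inflating $\alpha$ along $P\rtimes L\twoheadrightarrow L$ and applying this twice gives $(\CO_\alpha(P\rtimes L))^{(p^2)}\cong\CO_{\alpha^{p^2}}(P\rtimes L)$, so it suffices to produce an isomorphism $\CO_{\alpha^{p^2}}(P\rtimes L)\cong\CO_\alpha(P\rtimes L)$ of $\CO$-algebras.

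\emph{Step 2: an automorphism of $P\rtimes L$.} Let $\psi\colon L\to L$ be $\ell\mapsto\ell^p$, which is an automorphism since $p\nmid|L|$. The crucial claim is that, viewed through the inclusion $L\le\Aut(P)$, $\psi$ is realised by conjugation: there is $\theta\in\Aut(P)$ with $\theta\ell\theta^{-1}=\ell^p$ in $\Aut(P)$ for all $\ell\in L$. Equivalently, $P$ and its twist ${}^{\psi}P$ (the same group with $\ell$ acting by $\ell^p$) are isomorphic as $\bZ_p[L]$-modules. I would prove this using that, since $p\nmid|L|$, $\bZ_p[L]$ is a direct product of unramified extensions of $\bZ_p$, the factors indexed by the orbits of the $p$-power (Frobenius) map on the $\mathbb{Q}_p$-characters of $L$; twisting by $\psi$ permutes the characters within each orbit, hence acts on the corresponding factor by its Frobenius, which fixes the maximal ideal and therefore carries any finite module over that factor to an isomorphic one. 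Granting such $\theta$, the pair $\sigma=(\theta,\psi)$ is a group automorphism of $P\rtimes L$ (the displayed relation is exactly the compatibility with the semidirect product), and then $u_x\mapsto u_{\sigma(x)}$ gives $\CO_{\sigma^{*}\alpha}(P\rtimes L)\cong\CO_\alpha(P\rtimes L)$.

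\emph{Step 3: matching cohomology classes, and the obstacle.} It remains to check that $\sigma^{*}\alpha$ and $\alpha^{p^2}$ are cohomologous. Both are inflated from $L$ (for $\sigma^{*}\alpha$ because $\sigma$ covers $\psi$ on the quotient $L$), so I may work in $H^2(L,\CO^\times)$; since $L$ is abelian and $\CO^\times$ is $|L|$-divisible, $\mathrm{Ext}^1_{\bZ}(L,\CO^\times)=0$, so a class is determined by its alternating commutator pairing $\langle x,y\rangle_\gamma=\gamma(x,y)\gamma(y,x)^{-1}$. Then $\langle x,y\rangle_{\psi^{*}\alpha}=\langle x^{p},y^{p}\rangle_\alpha=\langle x,y\rangle_\alpha^{p^2}=\langle x,y\rangle_{\alpha^{p^2}}$, so $\psi^{*}\alpha$ and $\alpha^{p^2}$ are cohomologous and $\CO_{\sigma^{*}\alpha}(P\rtimes L)\cong\CO_{\alpha^{p^2}}(P\rtimes L)$; chaining the isomorphisms of the three steps yields $\CO_\alpha(P\rtimes L)\cong(\CO_\alpha(P\rtimes L))^{(p^2)}$. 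The step I expect to be the real obstacle is the module-theoretic claim in Step 2, that the $p$-power automorphism of $L$ is induced by conjugation in $\Aut(P)$; the rest is formal bookkeeping with twisted group algebras and a routine computation in the cohomology of a finite abelian group. This is also the source of the quadratic exponent: $\psi$ induces multiplication by $p^2$, not $p$, on $H^2(L,\CO^\times)$, so it exactly untwists a \emph{second} Frobenius twist, and small examples such as $k\langle x,y\rangle/(x^{p},y^{p},yx-qxy)$ with $q$ a root of unity of suitable order show that the first twist need not suffice.
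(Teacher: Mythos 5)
Your proposal is correct, and its overall skeleton coincides with the paper's: reduce the Frobenius twist to a power of the (inflated) cocycle, build an automorphism of $P\rtimes L$ restricting to $\ell\mapsto\ell^p$ on $L$, and compare cohomology classes via the alternating pairing $H^2(L,\CO^\times)\cong\Hom(\Lambda^2(L),\CO^\times)$ — your Step 1 and Step 3 are essentially the paper's final argument together with Lemma \ref{lem:autfrob} (up to the harmless difference that you pull back cocycles and use the exponent $p^2$ where the paper pushes forward and uses $\tfrac{1}{p^2}$; replacing the automorphism by its inverse interchanges the two). Where you genuinely diverge is the step you correctly identify as the crux, namely the existence of $\theta\in\Aut(P)$ with $\theta\ell\theta^{-1}=\ell^p$ for all $\ell\in L$: the paper (Lemma \ref{lem:frobaut}) proves this group-theoretically, decomposing $P$ into $L$-indecomposable homocyclic summands, lifting through the Frattini quotient using Lemma \ref{lem:pprimeauto} and the Schur--Zassenhaus theorem, and finally in the elementary abelian irreducible case observing that $y$ and $y^p$ share the same (irreducible) characteristic polynomial and are therefore conjugate in $\mathrm{GL}(P)$; you instead view $P$ as a finite $\bZ_p[L]$-module, use that $\bZ_p[L]$ is a product of rings of integers of unramified extensions indexed by Frobenius orbits of characters, note that the $p$-power twist acts on each factor by its Frobenius, which fixes the maximal ideal of each local factor, and conclude that the twisted module is isomorphic to $P$ by the structure theorem for finite modules over a complete discrete valuation ring. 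Both arguments are valid; the paper's is elementary and stays inside finite group theory, while yours is shorter, avoids the homocyclic/Frattini/Schur--Zassenhaus reductions, and makes the mechanism (Frobenius acting trivially on isomorphism classes of finite modules over unramified local rings) more transparent — though, like the paper's, it produces $\theta$ only up to the non-canonical choices in the module decomposition, so it does not obviously help with the explicitness issue raised in the paper's remark about strong Frobenius numbers.
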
 

Theorem~\ref{th:Fr}  is proved  in Section 2.

\begin{theorem}  \label{th:qu}
Let $P$ be a finite abelian $p$-group, $L$ an abelian $p'$-subgroup of 
$\Aut(P)$ and $\alpha \in H^2(L, k^{\times})$ and $\fA$ the basic 
algebra of the twisted group algebra $k_{\alpha}(P \rtimes L)$. Then  
$k_{\alpha}(P \rtimes L)$ is a matrix algebra over $\fA$ and $\fA$ has 
an explicit presentation as a quantised version of the  group algebra 
of the semidirect product $P\rtimes L$.
\end{theorem}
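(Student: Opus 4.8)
The plan is to realise $k_\alpha(P\rtimes L)$ as a crossed product, reduce to its basic algebra by an idempotent of the semisimple algebra $k_\alpha L$, diagonalise the action of $L$ on $kP$, and then read off a quiver presentation. Since $\alpha$ is inflated along $P\rtimes L\to L$, the algebra $k_\alpha(P\rtimes L)$ is the crossed product $kP*_\alpha L$: it contains an untwisted copy of $kP$ together with invertible elements $u_\ell$ ($\ell\in L$) with $u_\ell u_m=\alpha(\ell,m)u_{\ell m}$ and $u_\ell a u_\ell^{-1}={}^\ell a$ for $a\in kP$. As $L$ is an abelian $p'$-group and $k$ is algebraically closed, $k_\alpha L=\bigoplus_{\ell\in L}ku_\ell$ is split semisimple; writing $L_0$ for the radical of the alternating form $\beta(x,y)=\alpha(x,y)\alpha(y,x)^{-1}$ and $d^2=[L:L_0]$, all Wedderburn components of $k_\alpha L$ are isomorphic to $\Mat_d(k)$ and there are $m=|L_0|$ of them. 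Every simple $k_\alpha(P\rtimes L)$-module is inflated from $k_\alpha L$, because $P$ is a normal $p$-subgroup and hence acts trivially, so all simple modules have dimension $d$. Choosing a primitive idempotent in each component of $k_\alpha L$ and letting $e$ be their sum, $\fA:=e(kP*_\alpha L)e$ is basic and $k_\alpha(P\rtimes L)\cong\Mat_d(\fA)$ (an algebra all of whose simples have the same dimension $d$ is a $d\times d$ matrix algebra over its basic algebra); this gives the first assertion.

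Because $L$ is a $p'$-group, $P$ decomposes $L$-equivariantly into homocyclic subgroups, so $kP$ is an $L$-equivariant tensor product of truncated polynomial algebras $k[x]/(x^{p^a})$; averaging over $L$ then yields algebra generators $y_1,\dots,y_n$ of the radical $J(kP)$ that are common $L$-eigenvectors, ${}^\ell y_i=\chi_i(\ell)y_i$ with $\chi_i\in\Hom(L,k^\times)$, together with a presentation
\[
kP=k\langle y_1,\dots,y_n\rangle\,/\,\bigl(y_iy_j-y_jy_i,\ y_i^{p^{a_i}}\bigr),
\]
the relation $y_i^{p^{a_i}}=0$ being automatic since the $p^a$-power map annihilates $J(kP)$ when $P$ has exponent $p^a$. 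In $kP*_\alpha L$ one has $u_\ell y_i=\chi_i(\ell)y_iu_\ell$, so conjugation by monomials in the $y_i$ defines an action of $\Hom(L,k^\times)$ on $k_\alpha L$ by algebra automorphisms $\theta_\chi$; these permute the $m$ Wedderburn components simply transitively, via the (surjective) restriction map $\Hom(L,k^\times)\to\Hom(L_0,k^\times)$ acting by translation on the central primitive idempotents.

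To compute $\fA$, use the $k$-basis $\{y^cu_\ell\}$ of $kP*_\alpha L$ and the identity $ey^c=y^c\theta_{\chi_c}(e)$, where $\chi_c=\prod_i\chi_i^{c_i}$: each $y^c$-graded piece of $e(kP*_\alpha L)e$ turns out to be $m$-dimensional, so $\dim\fA=m|P|$, and one reads off a quiver presentation of $\fA$: the vertices are the $m$ simple $k_\alpha L$-modules, forming a torsor under $\Hom(L_0,k^\times)$; for each $i$ there is one arrow labelled $y_i$ out of every vertex, shifting the vertex by the image of $\chi_i$ in $\Hom(L_0,k^\times)$; and the relations are the truncations $y_i^{p^{a_i}}=0$ together with quantum commutation relations $y_iy_j=q_{ij}\,y_jy_i$, the scalar $q_{ij}\in k^\times$ being the root of unity forced when one moves one generator past the idempotents and the $u_\ell$, expressible through $\chi_i$, $\chi_j$ and values of $\beta$. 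This is the asserted quantised group algebra: for $\alpha=1$ all $q_{ij}=1$ and $L_0=L$, recovering $k[P\rtimes L]$ itself, while for general $\alpha$ it is a crossed product of a quantum complete intersection with the subgroup $L_0$ of $L$ acting on $P$.

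Finally one verifies that the displayed relations hold in $\fA$ --- a direct computation inside $kP*_\alpha L$ --- so the abstract quiver algebra with these generators and relations surjects onto $\fA$; comparing dimensions, the normal-form paths $y^c$ at the $m$ vertices number $m|P|=\dim\fA$, so the surjection is an isomorphism. The main obstacle is the third step, and within it the explicit determination of the scalars $q_{ij}$ (and the exact vertex shifts) in terms of $\alpha$ and of the action of $L$ on $P$ --- equivalently, showing that the crossed-product structure untwists into precisely a quantum complete intersection crossed with a group, with no further correction terms. This requires a careful choice of primitive idempotents and matrix units in $k_\alpha L$ adapted to the automorphisms $\theta_\chi$; the remaining steps are either standard ($p'$-semisimplicity, Morita theory, the homocyclic decomposition of $P$) or a straightforward dimension count.
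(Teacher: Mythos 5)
Your strategy is essentially correct but follows a genuinely different route from the paper's. You stay inside the crossed product $kP\ast_\alpha L$ and invoke the standard structure of the twisted group algebra $k_\alpha L$ of an abelian group (all simples of dimension $d=\sqrt{[L:L_0]}$, one block per character of $L_0$, where $L_0$ is the radical of the alternating bicharacter attached to $\alpha$), obtain the matrix-algebra statement from the Morita-theoretic fact that an algebra all of whose simples have dimension $d$ is $\Mat_d$ of its basic algebra $eAe$, and read off the presentation from the decomposition of $eAe$ into the pieces $y^c\,\theta_{\chi_c}(e)\,k_\alpha L\,e$ together with a dimension count $\dim\fA=|L_0|\cdot|P|$. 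The paper instead trades the twist for a central idempotent of a genuine group algebra $k\tilde G$ with $\tilde G=P\rtimes H$ (Lemma~\ref{le:twistcentral}), develops the character theory of the class-two group $H$ (Section~\ref{se:class2}) --- your $L_0$ and $d$ are its $Z(H)/Z$ and $m$ --- constructs an explicit Azumaya subalgebra $\fM\cong\Mat_m(k)$ commuting with the arrow elements, and gets $kGe\cong\fA\otimes_k\fM$ from the centralizer theorem (Lemma~\ref{le:Bass}) plus a dimension count. Your route avoids Section~\ref{se:class2} and Lemmas~\ref{le:coh} and~\ref{le:Bass}; the paper's route buys complete explicitness of the arrows $g_{i,\phi}w_ie_\phi$ and of the scalars in the relations.

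Two steps need more than you give them. First, the arrows are not the elements $y_i$ themselves (these commute in $kP$): an arrow at a vertex $v$ is $y_i t_{i,v}$ with $t_{i,v}$ a nonzero element of a one-dimensional corner of the semisimple algebra $k_\alpha L$ (the analogue of the paper's $g_{i,\phi}w_ie_\phi$), and both the surjectivity of $kQ/I\to eAe$ and the existence of the quantum commutation relations require checking that products of these corner elements are nonzero; this is exactly what the paper's explicit choice of the group elements $g_{i,\phi}$ and the nonvanishing argument in the proof of Theorem~\ref{th:A} provide, and it is silently absorbed into your ``straightforward dimension count''. Second, you state the relations with scalars $q_{ij}$ independent of the vertex; that is not justified, and the paper does not claim it --- its scalars $q_{i,j,\phi}$ depend on $\phi$, and the normalisation achieved in Lemma~\ref{le:quad-rel}(iii) via Lemma~\ref{le:coh} is only that they are values of $\chi$ on elements of $Z$. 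With vertex-dependent scalars your argument goes through unchanged, and determining them explicitly --- which you yourself flag as the main outstanding work --- is precisely what Lemmas~\ref{le:coh} and~\ref{le:quad-rel} accomplish.
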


The explicit generators and relations for $\fA$ are given in 
Theorem~\ref{th:A}.

By a theorem of K\"ulshammer~\cite{Kulshammer:1985a}, any block of a  
finite group algebra over $\CO$ with a normal defect group is isomorphic 
to a matrix algebra of a twisted  group algebra of the semi-direct 
product of  the defect group of the block with the inertial quotient 
of the block.  Combining  \cite{Kulshammer:1985a} with the two results 
above  yields the following.
  
\begin{cor} \label{cor:Fr} 
Let $B$ be a block of a  finite  group algebra over $\CO$  with a 
normal defect group $P$ and abelian inertial quotient $L$. Then $B$  
is isomorphic to its second Frobenius twist and if $\CO=k$, then
$B$ is a matrix algebra over a quantised version of the  group algebra 
of the semidirect product $P\rtimes L$.   
\end{cor}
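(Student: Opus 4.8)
The plan is to obtain the corollary as a formal consequence of Theorems~\ref{th:Fr} and~\ref{th:qu} together with K\"ulshammer's structure theorem, so that the only real work beyond those inputs is elementary bookkeeping with the Frobenius twist. First I would note that since $P$ is abelian the inertial quotient $L$ of $B$ embeds into $\Aut(P)$; as $L$ is always a $p'$-group and is abelian by hypothesis, it is an abelian $p'$-subgroup of $\Aut(P)$, and the action of $L$ on $P$ is the given inclusion. By \cite{Kulshammer:1985a} there are then a positive integer $n$ and a class $\alpha\in H^2(L,\CO^{\times})$, with respect to this action, such that $B\cong \Mat_n\bigl(\CO_{\alpha}(P\rtimes L)\bigr)$ as $\CO$-algebras.

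Next I would record two routine observations about the Frobenius twist $(-)^{(q)}$. If $f\colon A\to A'$ is an isomorphism of $\CO$-algebras, then the same underlying ring homomorphism is an isomorphism $A^{(q)}\to (A')^{(q)}$, since $f$ intertwines the twisted scalar actions $\lambda\cdot a=\lambda^{\frac{1}{q}}a$ as soon as it intertwines the untwisted ones. And for any $\CO$-algebra $A$ the identity on $n\times n$ matrices is an isomorphism $\Mat_n(A)^{(q)}\to \Mat_n(A^{(q)})$, because in each of these algebras a scalar $\lambda\in\CO$ carries a matrix $(a_{ij})$ to $(\lambda^{\frac{1}{q}}a_{ij})$. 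Applying these with $q=p^2$ and invoking Theorem~\ref{th:Fr}, I would conclude
\[
B^{(p^2)}\;\cong\;\Mat_n\bigl(\CO_{\alpha}(P\rtimes L)\bigr)^{(p^2)}\;\cong\;\Mat_n\bigl(\CO_{\alpha}(P\rtimes L)^{(p^2)}\bigr)\;\cong\;\Mat_n\bigl(\CO_{\alpha}(P\rtimes L)\bigr)\;\cong\;B,
\]
which is the first assertion.

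For the second assertion, take $\CO=k$. By Theorem~\ref{th:qu} there is an integer $m$ and an isomorphism $k_{\alpha}(P\rtimes L)\cong \Mat_m(\fA)$, where $\fA$ is the basic algebra of $k_{\alpha}(P\rtimes L)$ carrying the explicit quantised presentation of Theorem~\ref{th:A}. Feeding this into the K\"ulshammer isomorphism gives $B\cong\Mat_n\bigl(\Mat_m(\fA)\bigr)\cong\Mat_{nm}(\fA)$, so that $B$ is a matrix algebra over the quantised version $\fA$ of the group algebra of $P\rtimes L$, as required.

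I do not anticipate a genuine obstacle: the substance is entirely in Theorems~\ref{th:Fr} and~\ref{th:qu}, and the rest is formal. The one point needing care is to make sure the twisted group algebra supplied by \cite{Kulshammer:1985a} over $\CO$ (rather than merely over a field) really does have $L$ acting on $P$ and cohomology class $\alpha$ in exactly the form demanded by the hypotheses of the two main theorems; this is standard block theory, but worth checking explicitly.
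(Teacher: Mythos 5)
Your proposal is correct and follows exactly the paper's route: the paper derives the corollary by combining K\"ulshammer's theorem (block with normal defect group $\cong$ matrix algebra over a twisted group algebra $\CO_\alpha(P\rtimes L)$) with Theorems~\ref{th:Fr} and~\ref{th:qu}, and your extra bookkeeping (transport of isomorphisms along the Frobenius twist and compatibility of the twist with $\Mat_n$) is just the routine detail the paper leaves implicit. Note only that, as you tacitly assume, $P$ must be abelian (as in the paper's title and abstract) for $L$ to sit inside $\Aut(P)$ and for the two theorems to apply.
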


\begin{remark}
It seems unclear whether the same bound holds for strong Frobenius
numbers, introduced by Eaton and Livesey in \cite{EatonLivesey:2018b}.
One issue is that we do not have a sufficiently explicit 
description of the automorphism $\varphi$ of $P\rtimes L$ constructed 
in Lemma \ref{lem:frobaut}  below.
\end{remark}

\bigskip

\noindent
\textbf{Acknowledgement.} This material is based on work supported
by the National Science Foundation under Grant No.\ DMS-1440140 while 
the authors were in residence at the Mathematical Sciences Research Institute
in Berkeley, California, during the Spring 2018 semester.  
The first author thanks City, University of London for its hospitality 
during part of the preparation of this paper.
The third author acknowledges  support from EPSRC grant EP/M02525X/1.

\section{Proof of Theorem~\ref{th:Fr}.}

For $L$ a group, $\phi \in \Aut (L)$ and $\alpha: L \times L \to$
$\CO^{\times}$, denote by  $\, ^\phi  \alpha : L\times L \to \CO^{\times} $  
the map defined by $\,^{\phi} \alpha (x, y) =$
$\alpha (\phi^{-1} (x), \phi^{-1}(y) )$  and  by $(\phi,  \alpha ) \to$
$\,^{\phi }\alpha$ the induced action of $\Aut(L)$ on 
$H^2(L, \CO^{\times})$. For $q$ a power of $p$, denote by $\alpha ^{(q)}$ 
the  map $ L \times L \to \CO^{\times}$ defined by 
$\alpha ^{(q)}( x, y) =  \alpha (x, y) ^{\frac{1}{q} }  $  
and by $\alpha ^{(q)}$ the  image of $\alpha$ under the induced  
isomorphism  $H^2(L, \CO^{\times}) \cong  H^2(L, \CO^{\times})   $.   

\begin{lemma} \label{lem:autfrob}  
Let $L$ be a finite abelian $p'$-group and let $\phi: L \to L $ be the   
group automorphism defined by $\phi(x)= x^p$ for all $x\in$ $L$. 
Then for all $\alpha \in H^2(L, \CO^{\times} ) $, we have 
$ \, ^{\phi} \alpha=  \alpha ^{(p^2)} $. 
\end{lemma}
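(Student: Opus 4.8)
The plan is to compare the two classes via the alternating bilinear ``commutator'' pairing attached to a $2$-cocycle. Since $L$ is abelian, any $2$-cocycle $\alpha\colon L\times L\to\CO^\times$ (with trivial action) yields an alternating bi-multiplicative map
\[
\beta_\alpha\colon L\times L\to\CO^\times,\qquad \beta_\alpha(x,y)=\alpha(x,y)\,\alpha(y,x)^{-1},
\]
which depends only on the class of $\alpha$ and is the commutator pairing of the central extension of $L$ by $\CO^\times$ classified by $\alpha$. This defines a homomorphism $H^2(L,\CO^\times)\to\Hom(\Lambda^2 L,\CO^\times)$. First I would recall the classical fact that, for $L$ finite abelian, the kernel of this homomorphism consists of the classes represented by symmetric cocycles and is canonically isomorphic to $\Ext^1_{\bZ}(L,\CO^\times)$; hence the homomorphism is injective whenever $\Ext^1_{\bZ}(L,\CO^\times)=0$. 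So it will be enough to show $\beta_{{}^\phi\alpha}=\beta_{\alpha^{(p^2)}}$ together with this vanishing.

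For $\CO=k$ the vanishing is immediate, as $k^\times$ is divisible and therefore injective as a $\bZ$-module. For $\CO=W(k)$ I would first reduce the entire statement to the case $\CO=k$. Reduction modulo the maximal ideal gives a short exact sequence $1\to 1+pW(k)\to W(k)^\times\to k^\times\to 1$ of trivial $L$-modules; the group $1+pW(k)$ is uniquely divisible by every integer prime to $p$, and $|L|$ is prime to $p$, so $H^i(L,1+pW(k))=0$ for $i\ge 1$ and reduction induces isomorphisms $H^i(L,W(k)^\times)\cong H^i(L,k^\times)$ for $i\ge 1$. These isomorphisms are equivariant for the $\Aut(L)$-action (which only affects the group variable), and they intertwine the twist $\mu\mapsto\mu^{1/q}$ on $W(k)$ with the twist $\mu\mapsto\mu^{1/q}$ on $k$, because the Frobenius lift on $W(k)$ reduces modulo $p$ to the $q$-power map on $k$. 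Thus the statement over $W(k)$ reduces to the statement over $k$, and I henceforth take $\CO=k$, where the pairing homomorphism is injective.

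It then remains to compute the two pairings. Writing $\phi^{-1}(x)=x^{e}$ with $ep\equiv 1\pmod{|L|}$, bilinearity of $\beta_\alpha$ gives $\beta_{{}^\phi\alpha}(x,y)=\beta_\alpha(\phi^{-1}x,\phi^{-1}y)=\beta_\alpha(x,y)^{e^2}$, whereas, $\mu\mapsto\mu^{1/p^2}$ being a group homomorphism of $k^\times$,
\[
\beta_{\alpha^{(p^2)}}(x,y)=\bigl(\alpha(x,y)\,\alpha(y,x)^{-1}\bigr)^{1/p^2}=\beta_\alpha(x,y)^{1/p^2}.
\]
Now $\beta_\alpha(x,y)$ is a root of unity whose order divides $|L|$, hence is prime to $p$, and on such roots of unity the inverse Frobenius $\mu\mapsto\mu^{1/p}$ agrees with $\mu\mapsto\mu^{e}$ (since $(\zeta^{e})^{p}=\zeta$ and both maps are bijections of the finite group of $|L|$-th roots of unity). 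Hence $\beta_\alpha(x,y)^{1/p^2}=\beta_\alpha(x,y)^{e^2}$, so $\beta_{{}^\phi\alpha}=\beta_{\alpha^{(p^2)}}$, and by injectivity ${}^\phi\alpha=\alpha^{(p^2)}$ in $H^2(L,\CO^\times)$.

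I expect the only real care to be needed in the bookkeeping: one must verify that the description of $H^2(L,\CO^\times)$ through the commutator pairing $\beta$ is compatible with \emph{both} functorialities simultaneously — the contravariant $\Aut(L)$-action on the group argument and the covariant action of a ring automorphism of $\CO$ on the coefficients — and, in mixed characteristic, that the reduction to the residue field disrupts neither action. Once both operations have been transported to $\Hom(\Lambda^2 L,\CO^\times)$, the lemma collapses to the elementary identity $\mu^{1/p^2}=\mu^{e^2}$ for $|L|$-th roots of unity, with $e$ the inverse of $p$ modulo $|L|$.
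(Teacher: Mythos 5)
Your proof is correct and takes essentially the same approach as the paper: both transport the $\Aut(L)$-action and the Frobenius twist to the alternating (commutator) pairing in $\Hom(\Lambda^2 L,k^\times)$ and conclude from the fact that on roots of unity of order prime to $p$ the inverse Frobenius is $\zeta\mapsto\zeta^{e}$ with $e\equiv p^{-1}\pmod{|L|}$, which is the paper's identity $\tau(x^p\wedge y^p)=\tau(x\wedge y)^{p^2}$. The only cosmetic differences are that you spell out the reduction from $W(k)^\times$ to $k^\times$ (which the paper cites as well known) and obtain injectivity of the pairing map from the classical description of its kernel by symmetric cocycles, whereas the paper gets an isomorphism from the universal coefficient theorem; the explicit pairing you use is precisely the one described in the remark following the paper's proof.
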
 

\begin{proof} 
It is well-known that since $L$ is a finite $p'$-group, it follows that 
the canonical map $\CO\to$ $k$ induces an isomorphism 
$H^2(L,\CO^\times)\cong$ $H^2(L,k^\times)$. Thus we may assume
that $\CO=k$.

Consider the universal coefficient sequence
\[ 0 \to \Ext^1(H_1(L,\bZ),k^\times) \to H^2(L,k^\times) \to
  \Hom(H_2(L,\bZ),k^\times) \to 0. \]
Since $k$ is algebraically closed, $k^\times$ is divisible,
and therefore injective as an abelian group. So the first
term in this sequence is zero. For the third term, we have
$H_2(L,\bZ)\cong \Lambda^2(L)$, the exterior square in
the category of abelian groups. Therefore we obtain
an isomorphism
\[ H^2(L, k^\times) \cong \Hom(\Lambda^2(L),k^\times). \]
which   by naturality is $\Aut (L)$ equivariant  and which commutes 
with  the  Frobenius morphism of  $k$. More precisely, if     
$\alpha \in H^2(L,k^\times)$ corresponds to  $\tau \in   
\Hom(\Lambda^2(L),k^\times)$  under the above isomorphism, then  for 
any $\psi \in \Aut(L)$,  and any power $q$ of $p$, $\,^{\psi} \alpha$ 
corresponds to  the homomorphism  $\,^{\psi}  \tau$   defined by 
$\,^{\psi} \tau (x\wedge y) = 
\tau (\psi^{-1} (x) \wedge\psi^{-1} (y)) $ and $\alpha ^{(q)} $ 
corresponds to the homomorphism $\tau^{(q)}$  defined by 
$ \tau^{(q)}(x\wedge y) = \tau (x\wedge y)^{\frac{1}{q}} $. The result 
follows since for any $\tau \in \Hom(\Lambda^2(L),k^\times)$ we have 
$\tau(x^q \wedge y) = \tau(x\wedge y)^q = \tau (x\wedge y^q)$. 
\end{proof}

The isomorphism $H^2(L, k^\times) \cong \Hom(\Lambda^2(L),k^\times)$
in the above proof can be explicitly described as follows. 
If $\alpha\in$ $Z^2(L,k^\times)$, then the image of the class of
$\alpha$ in $\Hom(\Lambda^2(L),k^\times)$ is the group homomorphism
$x \wedge y \mapsto$ $\alpha(x,y)\alpha(y, x)^{-1}$, where $x$, $y\in$ 
$L$. One can either verify directly, using the $2$-cocycle identity, 
that this assignment is a group homomorphism in each component, or one 
can observe that $\alpha(x,y)\alpha(y,x)^{-1}$ is equal to the commutator 
of lifts of $x$, $y$ in a central extension determined by $\alpha$.
More precisely, let
\[ 1\to k^\times \to \tilde L \to L \to 1 \]
be a central extension defined by $\alpha$. For each $x \in L$, choose  
an  element $\tilde x \in \tilde L$ lifting $x$. An easy calculation
shows that  $\alpha(x,y)\alpha(y,x)^{-1}=$ $[\tilde x, \tilde y]$. 
This commutator does not depend on the choices of the lifts $\tilde x$ 
and since $\tilde L$ is a central extension of the abelian group $L$, 
this commutator is a group homomorphism in each component. In 
particular, $[\bar{x^p}, \bar{y^p}]=$ $[\bar x, \bar y]^{p^2}$, which 
explains the statement of the above Lemma.  

\begin{lemma} \label{lem:pprimeauto}  
Let $ P$ be a finite $p$-group and let  $\Phi(P)$ be the Frattini 
subgroup of $P$. The kernel  of the natural group homomorphism  
$ \Aut(P)  \to  \Aut(P/\Phi(P))  $ is a $ p$-group.  If $P$ is  
homocyclic, then the  map  $ \Aut(P) \to \Aut(P/\Phi(P))$ is surjective.
\end{lemma}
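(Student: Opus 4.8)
The plan is to prove the two assertions separately, the first by a coprime‑action argument and the second by an explicit matrix computation.

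\emph{The kernel is a $p$-group.} Write $K\le\Aut(P)$ for the kernel of $\Aut(P)\to\Aut(P/\Phi(P))$. Since $\Aut(P)$ is finite, by Cauchy's theorem $K$ is a $p$-group if and only if it contains no element of order $\ell$ for any prime $\ell\ne p$; so it suffices to show that every $p'$-element $\sigma\in K$ is trivial. Fix such a $\sigma$. Then $\langle\sigma\rangle$ acts on $P$ by automorphisms, and this action is coprime because $\sigma$ is a $p'$-element and $P$ is a $p$-group. The standard coprime action lemma for the (solvable) group $P$ gives $P=C_P(\sigma)\,[P,\sigma]$. By definition of $K$ we have $[P,\sigma]\le\Phi(P)$, so $P=C_P(\sigma)\,\Phi(P)$. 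Since every subgroup $H$ with $H\Phi(P)=P$ satisfies $H=P$ (the nongenerator property of the Frattini subgroup, i.e.\ Burnside's basis theorem), this forces $P=C_P(\sigma)$, that is, $\sigma=1$. Hence $K$ is a $p$-group.

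\emph{Surjectivity in the homocyclic case.} Suppose $P$ is homocyclic, say $P\cong(\bZ/p^n\bZ)^d$. Then $P$ is free of rank $d$ as a module over the local ring $\bZ/p^n\bZ$, so $\Aut(P)\cong\mathrm{GL}_d(\bZ/p^n\bZ)$. Moreover $\Phi(P)=P^p\cong(\bZ/p^{n-1}\bZ)^d$ and $P/\Phi(P)\cong(\bZ/p\bZ)^d$, hence $\Aut(P/\Phi(P))\cong\mathrm{GL}_d(\bF_p)$, and under these identifications the natural map of the lemma becomes reduction of matrix entries modulo $p$, namely $\mathrm{GL}_d(\bZ/p^n\bZ)\to\mathrm{GL}_d(\bF_p)$. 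To see this is surjective, take $\bar M\in\mathrm{GL}_d(\bF_p)$, lift each entry of $\bar M$ arbitrarily to $\bZ/p^n\bZ$ to obtain a matrix $M$ over $\bZ/p^n\bZ$, and observe that $\det M$ reduces to $\det\bar M\ne 0$ in $\bF_p$; thus $\det M$ lies outside the unique maximal ideal of $\bZ/p^n\bZ$ and is a unit, so $M\in\mathrm{GL}_d(\bZ/p^n\bZ)$ and $M\mapsto\bar M$.

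I expect the only real subtlety to be in the first part, where one must be careful to conclude that $K$ is \emph{genuinely} a $p$-group and not merely that it has no $p'$-elements of prime order — this is why the reduction via Cauchy's theorem is spelled out — and where one must correctly invoke the coprime identity $P=C_P(\sigma)[P,\sigma]$, valid since $\gcd(|\sigma|,|P|)=1$. The second part is then a routine bookkeeping exercise once one notes the matrix descriptions of $\Aut(P)$ and $\Aut(P/\Phi(P))$ and the fact that in $\bZ/p^n\bZ$ an element is a unit as soon as its reduction mod $p$ is nonzero.
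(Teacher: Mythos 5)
Your proof is correct. For the surjectivity statement it is essentially the paper's argument in matrix clothing: both rest on the fact that a homocyclic group is free over $\bZ/p^n\bZ$, so any endomorphism is determined by its values on a basis; the paper then deduces invertibility of the lift from the Frattini (nongenerator) property, while you deduce it from the determinant being a unit in the local ring $\bZ/p^n\bZ$ — these are interchangeable. For the first statement the paper simply cites Gorenstein (Chapter 5, Theorem 1.4), whereas you supply a self-contained proof via the coprime decomposition $P=[P,\sigma]\,C_P(\sigma)$ combined with $[P,\sigma]\le\Phi(P)$ and the nongenerator property; this is a clean and standard route (valid for arbitrary finite $p$-groups $P$, since $p$-groups are solvable so the coprime identity applies), and your care in reducing to $p'$-elements of prime order via Cauchy is sound. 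No gaps.
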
 

\begin{proof}  
For the first assertion 
see~\cite[Chapter 5, Theorem 1.4]{Gorenstein:1968a}.  Assume that $P$ 
is homocyclic  and let $ \{x_1,  x_2, \cdots,  x_r  \} $ be a  minimal 
generating set of $P$.  Let $ \psi \in  \Aut ( P/ \Phi(P)) $.   For 
each $ i$, $ 1\leq i \leq r$, pick an element  $u_i \in P $  such that 
$\psi ( x_i \Phi(P))  =  u_i \Phi( P) $.  Since    $P$ is homocyclic,  
there exists  a  homomorphism  $\tilde  \psi : P \to P $ such that  
$ \tilde \psi  (x_i)  = u_i $, $ 1\leq i \leq r $. Clearly,  
$\tilde \psi $ lifts $\psi $. Since $\psi$ is an automorphism,   
$ \mathrm{Im}(\tilde \psi) \Phi(P)  =  P $ whence 
$\mathrm{Im}(\tilde \psi) =   P$ and $\tilde \psi \in \Aut(P) $. 
\end{proof}  

\begin{lemma}  \label{lem:frobaut}  
Let $ P$ be a  finite  abelian $p$-group and let $L$ be an abelian   
$p'$-group  acting on  $P$. There exists an automorphism $\phi$ of 
$P\rtimes L$ such that $\phi(L)= L$ and $\phi(x) = x^p$ for all 
$x\in L$.  
\end{lemma}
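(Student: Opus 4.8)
The plan is to build $\phi$ from a matching pair: the automorphism $\psi\colon x\mapsto x^{p}$ of $L$, which is an automorphism because $L$ is a $p'$-group, together with a suitable group automorphism $\theta$ of $P$. For any $\theta\in\Aut(P)$, the assignment $ax\mapsto\theta(a)\psi(x)$ (writing elements of $P\rtimes L$ as $ax$ with $a\in P$, $x\in L$, and $xax^{-1}={}^{x}a$) is a group homomorphism $P\rtimes L\to P\rtimes L$ exactly when $\theta({}^{x}b)={}^{x^{p}}\theta(b)$ for all $x\in L$ and $b\in P$. Since $L$ acts on $P$ through its image $\bar L\le\Aut(P)$, an abelian $p'$-group, this condition says precisely that $\theta g\theta^{-1}=g^{p}$ in $\Aut(P)$ for every $g\in\bar L$. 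Granting such a $\theta$, the resulting map is bijective (both $\theta$ and $\psi$ are), restricts to $\psi$ on $L$ and hence fixes $L$ setwise, so it is the desired $\phi$. Thus everything reduces to producing $\theta\in\Aut(P)$ with $\theta g\theta^{-1}=g^{p}$ for all $g\in\bar L$.

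To construct $\theta$, I would regard the finite abelian $p$-group $P$ — a finitely generated torsion $\bZ_{p}$-module — as a module over $\Lambda:=\bZ_{p}\bar L$. Because $|\bar L|$ is prime to $p$, $\Lambda$ is a maximal $\bZ_{p}$-order in the commutative semisimple algebra $\mathbb{Q}_{p}\bar L$; the latter is a product $\prod_{i}K_{i}$ of finite field extensions of $\mathbb{Q}_{p}$, each generated by roots of unity of order prime to $p$ and therefore unramified, so $\Lambda\cong\prod_{i}W(\bF_{q_{i}})$ for suitable $p$-powers $q_{i}$. Correspondingly $P=\bigoplus_{i}P_{i}$, where $P_{i}$ is a finitely generated torsion module over the discrete valuation ring $W(\bF_{q_{i}})$ on which each $g\in\bar L$ acts as multiplication by a root of unity $\chi_{i}(g)\in W(\bF_{q_{i}})^{\times}$ of order prime to $p$, the map $\chi_{i}\colon\bar L\to W(\bF_{q_{i}})^{\times}$ being a group homomorphism.

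Now let $F_{i}$ be the Frobenius automorphism of $W(\bF_{q_{i}})$; it fixes $p$ and satisfies $F_{i}(\zeta)=\zeta^{p}$ for every root of unity $\zeta$ of order prime to $p$ (the Teichm\"uller elements). Since $F_{i}$ fixes $p$, it induces an additive automorphism of each quotient $W(\bF_{q_{i}})/p^{e}$; choosing a cyclic decomposition $P_{i}\cong\bigoplus_{j}W(\bF_{q_{i}})/p^{e_{ij}}$ and applying $F_{i}$ in each summand yields an $F_{i}$-semilinear automorphism $\theta_{i}$ of the abelian group $P_{i}$, i.e.\ $\theta_{i}(\lambda a)=F_{i}(\lambda)\theta_{i}(a)$. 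Then for $g\in\bar L$ and $a\in P_{i}$ one has $\theta_{i}(\chi_{i}(g)a)=F_{i}(\chi_{i}(g))\theta_{i}(a)=\chi_{i}(g)^{p}\theta_{i}(a)=\chi_{i}(g^{p})\theta_{i}(a)$, so $\theta_{i}$ conjugates the action of $g$ on $P_{i}$ to that of $g^{p}$. Hence $\theta:=\bigoplus_{i}\theta_{i}$ lies in $\Aut(P)$ and conjugates each $g\in\bar L$ to $g^{p}$, which finishes the construction.

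The main obstacle, I expect, is the structural step: establishing $\bZ_{p}\bar L\cong\prod_{i}W(\bF_{q_{i}})$ (equivalently, that $\bZ_{p}\bar L$ is a maximal order whose constituent fields are unramified) and thereby splitting $P$ into pieces on which $\bar L$ acts through Teichm\"uller scalars. Once that is in hand, the Frobenius of the Witt vectors does exactly the job of raising those scalars to the $p$-th power, and the rest is bookkeeping. The argument can equally be run without $p$-adic language, replacing $\bZ_{p}$ by $\bZ/p^{N}\bZ$ with $p^{N}=\exp(P)$ and each $W(\bF_{q_{i}})$ by its truncation $W(\bF_{q_{i}})/p^{N}$; this does not change the substance, and — consistently with the remark following Corollary~\ref{cor:Fr} — the automorphism $\phi$ obtained this way remains rather inexplicit, owing to the choice of cyclic decompositions entering $\theta$.
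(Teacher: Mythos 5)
Your argument is correct, but it takes a genuinely different route from the paper. Your opening reduction — that constructing $\phi$ amounts to finding $\theta\in\Aut(P)$ with $\theta g\theta^{-1}=g^{p}$ for every $g$ in the image $\bar L$ of $L$ in $\Aut(P)$ — is exactly the paper's first step. After that the paths diverge. The paper splits $P$ into $\bar L$-indecomposable (hence homocyclic) summands, passes to the Frattini quotient, lifts a solution from $P/\Phi(P)$ back to $P$ using that the kernel of $\Aut(P)\to\Aut(P/\Phi(P))$ is a $p$-group together with the Schur--Zassenhaus theorem, and finally, in the elementary abelian irreducible case (where $\bar L$ is cyclic, generated by $y$), observes that $y$ and $y^{p}$ have the same characteristic polynomial and are therefore conjugate in $\mathrm{GL}(P)$. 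You instead work integrally: since $p\nmid|\bar L|$, the algebra $\bZ_p\bar L$ decomposes as a product of unramified Witt rings $W(\bF_{q_i})$, so $P$ splits into summands on which $\bar L$ acts by Teichm\"uller scalars, and conjugation by the Frobenius, applied semilinearly via a chosen cyclic decomposition, raises these scalars to the $p$-th power. The structural input you flag as the ``main obstacle'' is standard (for cyclic $\bar L$ of order $n$ prime to $p$ it is the separability of $X^{n}-1$ modulo $p$ plus the Chinese Remainder Theorem and the fact that $\bZ_p[\zeta_n]$ is the full ring of integers of the unramified field $\mathbb{Q}_p(\zeta_n)$; the general abelian case follows by tensoring), so there is no genuine gap. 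Your approach buys a one-step treatment of all of $P$ at once, avoiding the homocyclic reduction, the lifting through the Frattini quotient and the Schur--Zassenhaus adjustment, and it produces a somewhat more concrete $\theta$ (Frobenius in a chosen $W(\bF_{q_i})$-module decomposition), though, as you note, it still depends on choices and so does not obviously improve on the inexplicitness mentioned in the remark after Corollary~\ref{cor:Fr}; the paper's argument, by contrast, stays entirely within elementary finite group theory and linear algebra over $\bF_p$.
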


\begin{proof}   
Denote  by  $L'$ the image of  $L$  in $\Aut (P) $. Proving the  
existence of   $\phi$  is equivalent  to showing that there exists  
$\tau  \in \Aut(P) $ of $P$   such that    $\tau  y \tau^{-1} = y^p $     
for all $ y \in L'$.    Suppose first that  $P  = P_1 \times P_2 $  for 
$L'$-invariant subgroups   $P_1$ and $ P_2$  of $P$.  If  for each 
$i=1,2 $, there exists    $\tau_i \in \Aut(P_i) $   with   
$\tau_i  ({y {\da_{P_i} }})\tau_i^{-1} =     ({y {\da_{P_i} }})^p $ for 
all $ y\in L'$, then the map $\tau \colon P \to P  $ sending $x_1x_2  $ 
to  $\tau _1(x_1) \tau_2 (x_2) $  for $x_1 \in P_1 $,   $x_2 \in P_2$  
has the required properties. Hence we may assume  that  $P$  is 
indecomposable for the action of $L'$ and consequently that  $P$ is 
homocyclic (see  \cite [Chapter 5, Theorem 2.2]{Gorenstein:1968a}).   

We claim that  it suffices to prove the result  for  the case that $P$ 
is elementary abelian.  Indeed, let   $U$ be the kernel of the  map 
$\Aut(P) \to  \Aut(P/\Phi(P))  $. By Lemma~\ref{lem:pprimeauto},  $U$ is 
a $p$-group.    Let  $\bar L' $ be  the  image of $L' $ in   
$ \Aut (P/\Phi(P)) $ and suppose   that  there exists    
$\bar \tau \in \Aut (P/\Phi (P) )  $ such that   
$\bar \tau \eta \bar \tau^{-1} =  \eta^{p}$ for all  $ \eta \in \bar L $.  
By  Lemma~\ref{lem:pprimeauto}, there exists $\tau \in \Aut (P) $  
lifting $\bar \tau $. Since $L'U$ is the full inverse image of $\bar L'$ 
in $\Aut(P)$, and  $\bar L' $ is  $\bar  \tau $-invariant,   
$\tau  L' U  \tau ^{-1}   = L'U  $. Hence   $ L'$ and 
$ \tau L' \tau^{-1} $  are both  complements  to   the normal  Sylow 
$p$-subgroup  $U$  of $L'U $.   By the Schur--Zassenhaus theorem, there 
exists  $u \in U $ such that   $uL'u^{-1}= \tau   L'  \tau^{-1} $. 
Replacing $\tau $  by $ u^{-1} \tau $ we may assume that  
$\tau L'  \tau^{-1}= L'$. Then for any $y \in L '$, $\tau  y' \tau^{-1}$ 
and   $ y'{^p} $ are   elements  of   the $p'$-group  $L'$  lifting   the 
same element of $\bar L' $.
The claim  follows by Lemma~\ref{lem:pprimeauto}.
 
By the  discussion above we may assume  that  $P$ is elementary abelian  
and that $P$ is an indecomposable, faithful ${\mathbb F}_p L' $-module.  
Since  $L'$ is an abelian  $p'$-group, $ P$  is  in fact an irreducible  
${\mathbb F}_p L' $-module and  $L'$ is cyclic.  Let 
$L' =\langle y  \rangle  $  and let $f (X)  \in {\mathbb F}_p[X]$ be the 
characteristic polynomial of $ y$  as  an element of 
$\mathrm{End}_{{\mathbb F}_p}(P) $. Since $f(y^p)  = f(y) ^p=0$,  $f(X)$ 
is also the characteristic polynomial of $y^p$. Thus, $y$ and $y^p$ are 
conjugate in $\mathrm{GL} (P)\cong \Aut (P)$.
 \end{proof}

\begin{proof}[{Proof of Theorem~\ref{th:Fr}}] 
Let $\phi $ be as in  Lemma~\ref{lem:frobaut}. Then $\phi$ induces a  
$\CO$-algebra  isomorphism  $\CO_{\alpha} (P\rtimes L) \cong 
\CO_{\,^{\phi} \alpha}(P\rtimes L)$. The result follows by  
Lemma~\ref{lem:autfrob} since  for any power $q$ of $p$, 
$\CO_{\alpha} (P \rtimes L) ^ {(q)}  \cong  
\CO_{\alpha^{(q)}}(P \rtimes L) $ as $\CO$-algebras.
\end{proof} 

\section{On characters of groups of class two}\label{se:class2}

For a finite group $H$ denote by $\mathrm{Irr} (H)$ the set of ordinary 
irreducible characters of $H$. If $N$ is a normal subgroup of $H$ and 
$\chi \in \mathrm{Irr}(H)$, denote by $\mathrm{Irr}(H \, | \, \chi)$ 
the subset of $\mathrm{Irr}(H)$ covering $\chi$. Recall that  if 
$H/N$ is abelian, then the group of irreducible (i.e. linear) characters 
of $H/N$ acts on $ \mathrm{Irr}(H \, | \, \chi)$ via multiplication and 
this action is transitive.

\begin{prop} \label{pro:class2} 
Let $H$ be a finite group which is nilpotent of class $2$.  Let $\chi$ 
be a faithful irreducible character of $Z:=[H, H]$.  Set   
$m = \sqrt{ |H : Z(H)|} $.   
\begin{enumerate}  
\item    
For any  $ \phi \in \mathrm{Irr} (Z(H) \, | \, \chi)$, 
$\phi{\ua^H} = m \tau_{ \phi} $  for  some  
$\tau_ {\phi}   \in \mathrm{Irr} (H) $.  In particular, 
$m=\tau_\phi (1) $ is an integer.  

\item 
The map $\phi \to \tau _{\phi} $,  
$ \phi \in  \mathrm{Irr} (Z(H) \, | \, \chi)$, is  a bijection between     
$\mathrm{Irr} (Z(H) \, | \, \chi)$ and $\mathrm{Irr} (H \, | \, \chi) $.

\item     
The actions of $\mathrm{Irr}( H/Z)$ on $\mathrm{Irr}(H \, | \, \chi)$ 
and of $\mathrm{Irr}(Z(H)/Z)$ on $\mathrm{Irr}( Z(H) \, | \, \chi)$ are 
compatible  with the bijection in {\rm (ii)}. More precisely, let  
$\eta \in \mathrm{Irr}( H/Z)$, and let $\phi \in 
\mathrm{Irr}(Z(H) \, | \, \chi) $. Then 
$\tau_{ \eta{\da_{Z(H)}} \phi} =\eta \tau_{\phi}$. Consequently, 
$ \eta \tau_{\phi} = \tau_{\phi} $ if and only if $\eta $ restricts to 
the trivial character of $Z(H)$.

\end{enumerate} 
\end{prop}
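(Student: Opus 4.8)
The plan is to recognise the faithful character $\chi$ as equipping $\bar H:=H/Z(H)$ with a nondegenerate symplectic form, and to realise each $\tau_\phi$ as a character induced from a maximal isotropic subgroup — the finite-group form of the Stone--von Neumann construction.

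\emph{Setup.} Since $H$ has class $2$ we have $Z=[H,H]\le Z(H)$, so $Z$ is abelian; as $\chi$ is faithful and irreducible, $Z$ is cyclic and $\chi$ is linear. The commutator map $(x,y)\mapsto[x,y]$ descends to a biadditive alternating map $\bar H\times\bar H\to Z$ (biadditivity and descent both use $[H,H]\le Z(H)$), and its radical is trivial because $[x,-]$ is trivial exactly when $x\in Z(H)$. Composing with the faithful $\chi$ yields a nondegenerate alternating bicharacter $b\colon\bar H\times\bar H\to\mathbb{C}^{\times}$. By the structure theory of such forms, $|\bar H|=m^2$ with $m=\sqrt{|H:Z(H)|}\in\bZ$, and there is a maximal isotropic (``Lagrangian'') subgroup $\bar A\le\bar H$ with $|\bar A|=m$ and $\bar A^{\perp}=\bar A$. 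Let $A\le H$ be the preimage of $\bar A$; then $Z(H)\le A\trianglelefteq H$, $|H:A|=m$, and $A$ is abelian (for $a,a'\in A$ one has $[a,a']\in Z$ and $\chi([a,a'])=b(\bar a,\bar a')=1$, so $[a,a']=1$).

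\emph{Proof of (i).} Fix $\phi\in\mathrm{Irr}(Z(H)\,|\,\chi)$; it is linear and $H$-invariant since $Z(H)$ is central. As $A$ is abelian and contains $Z(H)$, $\phi$ extends to a linear character $\hat\phi$ of $A$. I claim $\tau_\phi:=\hat\phi\ua^H$ is irreducible of degree $m$. Since $A\trianglelefteq H$, Mackey's formula reduces $\langle\hat\phi\ua^H,\hat\phi\ua^H\rangle$ to $\sum_{t\in H/A}\langle{}^t\hat\phi,\hat\phi\rangle_A$; the term for $t\in A$ is $1$, while for $t\notin A$ a direct computation gives ${}^t\hat\phi=\hat\phi\cdot b(-,\bar t)$ on $A$, and $b(-,\bar t)\ne 1$ on $A$ precisely because $\bar t\notin\bar A=\bar A^{\perp}$, so that term vanishes. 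Hence $\tau_\phi\in\mathrm{Irr}(H)$, $\tau_\phi(1)=|H:A|=m$, and a second Mackey computation gives $\tau_\phi\da_{Z(H)}=m\phi$, so $\tau_\phi$ covers $\phi$ (hence covers $\chi$). Finally $\langle\tau_\phi,\phi\ua^H\rangle=\langle\tau_\phi\da_{Z(H)},\phi\rangle=m$ while $\phi\ua^H(1)=m^2=m\,\tau_\phi(1)$, so comparing degrees forces $\phi\ua^H=m\tau_\phi$. In particular $m=\tau_\phi(1)$ is an integer.

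\emph{Proof of (ii) and (iii).} Any $\tau\in\mathrm{Irr}(H\,|\,\phi)$ is a constituent of $\phi\ua^H=m\tau_\phi$ by Frobenius reciprocity, so $\mathrm{Irr}(H\,|\,\phi)=\{\tau_\phi\}$; this makes $\phi\mapsto\tau_\phi$ a well-defined map into $\mathrm{Irr}(H\,|\,\chi)$, injective since $\tau_\phi\da_{Z(H)}=m\phi$ recovers $\phi$. For surjectivity, given $\tau\in\mathrm{Irr}(H\,|\,\chi)$, Clifford theory over the central abelian subgroup $Z(H)$ gives $\tau\da_{Z(H)}=\tau(1)\phi'$ for a unique $\phi'\in\mathrm{Irr}(Z(H))$, which then covers $\chi$, so $\tau\in\mathrm{Irr}(H\,|\,\phi')=\{\tau_{\phi'}\}$. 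For (iii), let $\eta\in\mathrm{Irr}(H/Z)$; then $\eta\da_Z=1$, hence $\eta\da_{Z(H)}\phi\in\mathrm{Irr}(Z(H)\,|\,\chi)$, while $\eta\tau_\phi$ is irreducible, covers $\chi$, and satisfies $(\eta\tau_\phi)\da_{Z(H)}=\eta\da_{Z(H)}\cdot m\phi=m(\eta\da_{Z(H)}\phi)$, so $\eta\tau_\phi$ covers $\eta\da_{Z(H)}\phi$; by uniqueness, $\eta\tau_\phi=\tau_{\eta\da_{Z(H)}\phi}$. Combining this with injectivity of $\phi\mapsto\tau_\phi$ yields $\eta\tau_\phi=\tau_\phi$ iff $\eta\da_{Z(H)}\phi=\phi$ iff $\eta\da_{Z(H)}=1$.

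\emph{Main obstacle.} The crux is the irreducibility of $\hat\phi\ua^H$ — that one Lagrangian already accounts for the entire part of the representation theory lying over $\phi$ — together with the structure theory of nondegenerate alternating bicharacters on finite abelian groups (squareness of the order, existence of self-perpendicular subgroups) on which it rests; once those are in place, (i)--(iii) are bookkeeping with Clifford theory and Frobenius reciprocity.
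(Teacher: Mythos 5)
Your proof is correct, but it takes a genuinely different route from the paper's. You construct the irreducible characters over $\chi$ explicitly: you turn the commutator map into a nondegenerate alternating bicharacter $b$ on $H/Z(H)$ (using faithfulness of the linear character $\chi$ of the cyclic group $Z$), pick a maximal isotropic subgroup, induce an extension $\hat\phi$ of $\phi$ from its abelian preimage $A$, and get irreducibility of $\hat\phi{\ua^H}$ from Mackey's criterion because $\bar A=\bar A^{\perp}$; Frobenius reciprocity and a degree count then give $\phi{\ua^H}=m\tau_\phi$, and (ii), (iii) follow by the same Clifford-theoretic bookkeeping as in the paper. The paper instead proves that any $\tau\in\mathrm{Irr}(H\,|\,\chi)$ vanishes off $Z(H)$ (using that $Z(H)$ is the intersection of the maximal abelian subgroups together with a stabilizer computation that is exactly the nondegeneracy you exploit), and then extracts $\tau(1)=m$ from the orthogonality relation before applying Frobenius reciprocity. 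Your approach buys more: it exhibits $\tau_\phi$ as a monomial character induced from an explicit abelian subgroup and shows structurally that $|H:Z(H)|$ is a square, but it leans on the existence of a self-perpendicular (Lagrangian) subgroup, which you cite as a black box; you should either reference it or note the short argument (any maximal isotropic $\bar A$ satisfies $\bar A=\bar A^{\perp}$ since the form is alternating, and nondegeneracy gives $|\bar A|\,|\bar A^{\perp}|=|H:Z(H)|$). The paper's route avoids that input entirely and obtains the integrality of $m$ as an output of the orthogonality computation, at the price of being less explicit about where the irreducibles come from.
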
  

\begin{proof} 
Let  $\tau $ be an irreducible character of $H$ covering $\chi $. We 
claim that $\tau (x) =0 $ for all $ x \in H\setminus  Z(H) $. Indeed,   
since  $Z(H) $ is the intersection of  all  maximal abelian subgroups of 
$H$,  it suffices to prove that   if $A$ is a maximal subgroup of $H$, 
then $\tau (x) =0 $  if  $ x \notin A$. So, let $A$ be a maximal abelian 
subgroup of $H$.  Then $ Z(H) \leq  A $ and since $H $  is  of class $2$, 
$ A$ is normal in $H$. Let $\psi $  be an irreducible constituent of the 
restriction of $\tau$ to $A$  and suppose that   $g\in H$  is such that  
${^g}\psi=\psi$. Then for all $a\in A$, $\psi(gag^{-1})=\psi(a)$ and so 
$\psi(gag^{-1}a^{-1})=1$. Since  the restriction of $\psi $ to $ Z $ 
equals $\chi $, we have that $\chi(gag^{-1}a^{-1})=1$  for all 
$ a \in A $.  The faithfulness of $\chi $  and the maximality of $A $  
now  imply  that $  g \in C_G(A) = A $.   Consequently,   $\tau =   
\psi{\ua^H}$ and $\tau (x) =0$ for all $x \notin A$, proving the claim.

Let $ \phi  $ be   the unique linear character of  $Z(H)$ covering 
$\chi$ and which is covered by $\tau$.  Since $\phi$ is linear, the 
restriction of $\tau$ to $Z(H)$ consists of $\tau(1)$ copies of 
$\phi $. By the claim above,
\[  1 =  \langle  \tau, \tau \rangle   =\frac{1}{|H| } \sum_{x\in Z(H) } 
\tau (x) \tau (x^{-1} ) =  \frac{\tau (1)^2}{ |H|} \sum_{x\in Z(H) }  
\phi(x) \phi(x^{-1} ) = \frac{  \tau (1)^2  } {m^2}.  \] 
Thus   $ \tau(1) =m$ and
\[  \tau (1) m  =      |H:Z(H)|=  \phi{\ua^H}(1).\]
On the other hand, by Frobenius reciprocity the  multiplicity of  $\tau $ 
as a constituent of $\phi{\ua^H} $ equals  $\tau (1)  $.     So    
$ \phi{\ua^H}  =     m \tau  $. Setting $\tau_{\phi}= \tau $ proves   
part (i)  of the proposition.  Part (ii)  is immediate from (i)   and 
the fact that  every  element of $\mathrm{Irr} (H \, | \, \chi )  $ 
covers a unique element of $\mathrm{Irr} ( Z(H) \, | \, \chi ) $.   
By the induction formula, $\eta (\phi{\ua^H})  =   
( \eta{\da_{Z(H)}}\phi){\ua^H}$,   hence (i)  gives that     
$\tau_{ \eta{\da_{Z(H)}}  \phi}    =\eta  \tau_{\phi}$.   Now (ii)  
yields that  $\eta \tau _{\phi} = \tau_{\phi} $ if and only if  
$ \eta{\da_{Z(H)}} \phi  =  \phi $  if and only if $ \eta{\da_{Z(H)}}$ 
is trivial.
\end{proof}

\section{The basic algebra.}

\begin{lemma} \label{le:coh} 
Let $ 1\to  A \to B \to  C  \to 1  $ be a  short exact sequence of 
abelian groups  and let   $ \pi:  D \to    A$ be a surjective 
homomorphism of abelian groups. For each $\alpha  \in C$, choose  
a pre-image  $u_{\alpha } $  in $B$.  Then  there exists  
a  $2$-cocycle  $(\alpha, \beta) \to f_{\alpha, \beta} $   from    
$C \times C$ to $D$ such that $\pi(f_{\alpha, \beta}) =
u_{\alpha}^{-1}u_{\beta}^{-1} u_{\alpha\beta} $   and 
$ f_{\alpha, \beta} =  f_{\beta, \alpha}$ for all $\alpha, \beta \in  
C$.  
\end{lemma}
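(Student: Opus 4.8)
The plan is to recognise $u_\alpha^{-1}u_\beta^{-1}u_{\alpha\beta}$ as a symmetric $2$-cocycle and then to lift it through $\pi$, the essential point being that $\bZ$ is hereditary, so that abelian group extensions lift along the surjection $\pi$. Throughout, cohomology is taken with the trivial $C$-action, and I write the groups multiplicatively. Set $a_{\alpha,\beta}:=u_\alpha^{-1}u_\beta^{-1}u_{\alpha\beta}$. Its image in $C$ is $\alpha^{-1}\beta^{-1}\alpha\beta=1$, so $a_{\alpha,\beta}\in A$; since $B$ is abelian and $u_{\alpha\beta}=u_{\beta\alpha}$, we have $a_{\beta,\alpha}=a_{\alpha,\beta}$; and cancelling the pairwise commuting factors $u_\gamma$ shows that $a=(a_{\alpha,\beta})$ satisfies the $2$-cocycle identity, so $a\in Z^2(C,A)$. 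Thus $a$ is a symmetric $2$-cocycle, and its class lies in the subgroup of $H^2(C,A)$ consisting of the classes of abelian group extensions of $C$ by $A$, which is naturally isomorphic, in the coefficient variable, to $\Ext^1_\bZ(C,A)$.

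Writing $E:=\Ker\pi$, I would then apply $\Ext^*_\bZ(C,-)$ to $1\to E\to D\xrightarrow{\pi}A\to 1$ to obtain the exact sequence $\Ext^1_\bZ(C,D)\xrightarrow{\pi_*}\Ext^1_\bZ(C,A)\to\Ext^2_\bZ(C,E)$; as $\bZ$ has global dimension one, $\Ext^2_\bZ(C,E)=0$, so $\pi_*$ is onto. Translating back into symmetric cocycles, there is a symmetric $2$-cocycle $\tilde a\in Z^2(C,D)$ such that $\pi\circ\tilde a$ is cohomologous to $a$, say $a_{\alpha,\beta}=\pi(\tilde a_{\alpha,\beta})\cdot g_\alpha g_\beta g_{\alpha\beta}^{-1}$ for a suitable map $g\colon C\to A$. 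Concretely, such $\tilde a$ arises by choosing pre-images $\tilde u_\alpha\in\tilde B$ of $\alpha\in C$ in an abelian group extension $1\to D\to\tilde B\to C\to 1$ whose pushout along $\pi$ is equivalent to $1\to A\to B\to C\to 1$, and setting $\tilde a_{\alpha,\beta}=\tilde u_\alpha^{-1}\tilde u_\beta^{-1}\tilde u_{\alpha\beta}$.

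Finally, since $\pi$ is surjective I would lift $g$ pointwise to a map $\hat g\colon C\to D$ with $\pi\circ\hat g=g$, and set $f_{\alpha,\beta}:=\tilde a_{\alpha,\beta}\cdot\hat g_\alpha\hat g_\beta\hat g_{\alpha\beta}^{-1}$. This $f$ is a $2$-cocycle, being the product of the cocycle $\tilde a$ with the coboundary $(\alpha,\beta)\mapsto\hat g_\alpha\hat g_\beta\hat g_{\alpha\beta}^{-1}$; it is symmetric, since $\tilde a$ is symmetric and $C$ and $D$ are abelian; and $\pi(f_{\alpha,\beta})=\pi(\tilde a_{\alpha,\beta})\cdot g_\alpha g_\beta g_{\alpha\beta}^{-1}=a_{\alpha,\beta}=u_\alpha^{-1}u_\beta^{-1}u_{\alpha\beta}$, as required.

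The only step that is not routine cocycle manipulation is the surjectivity of $\pi_*\colon\Ext^1_\bZ(C,D)\to\Ext^1_\bZ(C,A)$, equivalently the vanishing of $\Ext^2_\bZ(C,E)$; this uses nothing about $A,B,C,D$ beyond their being abelian groups. If one prefers to avoid homological language, it can be checked by hand: choose a presentation $1\to R\to F\to C\to 1$ with $F$, hence $R$, free abelian; then $\Ext^1_\bZ(C,M)=\mathrm{coker}\bigl(\Hom(F,M)\to\Hom(R,M)\bigr)$ naturally in $M$, and $\Hom(R,D)\to\Hom(R,A)$ is onto because $R$ is free and $\pi$ is onto.
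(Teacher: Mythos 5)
Your proof is correct and rests on exactly the same key point as the paper's argument: since $\Ext^2_\bZ(C,-)$ vanishes, the map $\pi_*\colon\Ext^1_\bZ(C,D)\to\Ext^1_\bZ(C,A)$ is surjective, so the class of the given extension lifts. The only cosmetic difference is that the paper realizes the lift as an abelian extension $1\to D\to\hat B\to C\to 1$ mapping onto $B$ and sets $f_{\alpha,\beta}=v_\alpha^{-1}v_\beta^{-1}v_{\alpha\beta}$ for preimages $v_\alpha$ of the chosen $u_\alpha$, getting $\pi(f_{\alpha,\beta})=u_\alpha^{-1}u_\beta^{-1}u_{\alpha\beta}$ on the nose, whereas you work at the level of symmetric cocycles and obtain the exact equality by lifting the correcting $1$-cochain through $\pi$; both are valid.
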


\begin{proof} 
It is well-known that $\Ext^n_\bZ(C,A)=\{0\}$ for $n\geq$ $2$, and hence 
the connecting homomorphism $\Ext^1_\bZ(C,A)\to$ 
$\Ext^2_\bZ(C,\ker(\pi))=$ $\{0\}$ is zero. Thus the map 
$\Ext_\bZ^1(C,D)\to$ $\Ext_\bZ^1(C,A)$ induced by $\pi$ is surjective. 
In particular, the element in $\Ext^1_\bZ(C,A)$ represented by the given 
short exact sequence lifts to an element in $\Ext_\bZ^1(C,D)$. Rephrased 
in terms of extensions this means that there is a commutative diagram of 
abelian groups with exact rows of the form
$$\xymatrix{1\ar[r] & D\ar[r] \ar[d]_{\pi} & \hat B \ar[r]\ar[d]^{\tau} 
& C \ar[r] \ar@{=}[d] & 1 \\
1 \ar[r] &A\ar[r] & B\ar[r] & C \ar[r] & 1} $$
Note that $\tau$ is surjective and restricts to the map $\pi$ on $D$.
For $\alpha\in$ $C$, choose a preimage $v_\alpha$ of $u_\alpha$ in
$\hat B$ and set $f_{\alpha,\beta}=$ 
$v_\alpha^{-1}v_\beta^{-1}v_{\alpha\beta}$ for all $\alpha$, $\beta\in$ 
$C$. Since $\hat B$ is abelian, it follows that $f_{\alpha,\beta}=$ 
$f_{\beta,\alpha}$. Thus $(\alpha,\beta)\mapsto$ $f_{\alpha,\beta}$ is a 
$2$-cocycle with the properties as stated.
\end{proof}

We recall the following  result on the structure of twisted group algebras.

\begin{lemma}\label{le:twistcentral}  
Let $G$ be a finite group and let  $\alpha\in H^2(G ,k^\times)$. 
Then there exists  a central extension
\[ 1 \to Z \to  \tilde G \to   G \to 1 \]
with $Z$ a finite cyclic $p'$-group and a linear character 
$\chi \colon Z \to k^{\times} $ such that  
$k_{\alpha}  G \cong k\tilde G  e $  where
$e= \frac{1}{|Z|} \sum _{z\in Z}  \chi(z^{-1}) z $  is  the idempotent  
of $kZ$  corresponding to $\chi $.  Moreover,   
$Z$ may be chosen to  be contained in   $[\tilde G, \tilde G] $.
\end{lemma}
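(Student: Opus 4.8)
The plan is to realise the twisted group algebra $k_\alpha G$ as a block idempotent truncation of an honest group algebra, by choosing a central extension in which the kernel is a $p'$-group and, crucially, sits inside the derived subgroup. The starting point is the standard fact that any class $\alpha\in H^2(G,k^\times)$ can be represented by a central extension $1\to k^\times\to\hat G\to G\to 1$, together with a choice of preimages $u_g\in\hat G$ for $g\in G$ giving a $2$-cocycle valued in $k^\times$; the subalgebra of $k\hat G$ spanned by the $u_g$, where we identify scalars in $k^\times\le\hat G$ with scalars in $k$, is isomorphic to $k_\alpha G$. The issue with this is that $k^\times$ is not finite. So the first step is to replace $k^\times$ by a finite cyclic $p'$-group. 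Since $G$ is finite, $H^2(G,k^\times)$ is finite of order prime to $p$ (as $k^\times$ is $p$-divisible, the $p$-part of cohomology vanishes); more concretely, $\alpha$ has finite order $n$ prime to $p$, and one can choose a representative cocycle taking values in the group $\mu_n$ of $n$th roots of unity in $k^\times$, which is cyclic of order $n$ prime to $p$. This produces a central extension $1\to\mu_n\to G^\sharp\to G\to 1$ with $G^\sharp$ finite, and a faithful linear character $\chi$ of $\mu_n$ such that $k_\alpha G\cong k G^\sharp e$ where $e=\frac{1}{n}\sum_{z\in\mu_n}\chi(z^{-1})z$ is the corresponding block idempotent of $k\mu_n$ (note $n$ is invertible in $k$).

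The remaining, and main, point is the last sentence: arranging $Z\subseteq[\tilde G,\tilde G]$. The extension $G^\sharp$ above need not have this property — for instance if $\alpha$ is trivial we would get a direct product. The fix is to enlarge the extension using Lemma~\ref{le:coh}. The plan is to apply that lemma to the short exact sequence $1\to\mu_n\to G^\sharp\to G\to 1$ — or rather to a suitable sub-extension capturing only the genuinely non-split part — together with a surjection $\pi\colon D\to\mu_n$ from a finite abelian $p'$-group $D$ chosen so that the pulled-back extension becomes a "stem" extension, i.e. one in which the central kernel lies in the commutator subgroup. Concretely, for a finite group $G$ one has the well-known fact that every class in $H^2(G,k^\times)$ is represented by a \emph{representation group} (Schur cover-type) extension in which the kernel is contained in the commutator subgroup; the content of Lemma~\ref{le:coh} is precisely that one can choose the lifted cocycle $f_{\alpha,\beta}$ to be \emph{symmetric}, $f_{\alpha,\beta}=f_{\beta,\alpha}$, and symmetry of the cocycle forces the commutators $[v_\alpha,v_\beta]$ in the new extension $\hat B$ to be controlled — in fact one uses symmetry to show the central kernel of the resulting extension is generated by commutators. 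So I would: (1) write the non-split part of $\alpha$ via a finite cyclic $p'$-quotient, giving $1\to A\to B\to G\to 1$ with $A$ finite cyclic $p'$; (2) apply Lemma~\ref{le:coh} with $\pi\colon D\twoheadrightarrow A$ chosen as, say, the projection from the relevant piece of the abelianisation of a free presentation, to get $1\to D\to\hat B\to G\to 1$ with a symmetric cocycle $f$; (3) replace $D$ by $[\hat B,\hat B]\cap D$ (or pass to the subgroup of $\hat B$ generated by the $v_\alpha$'s), using symmetry of $f$ to check this still surjects onto the needed characters — this is where one shows $Z:=$ the new kernel lies in $[\tilde G,\tilde G]$; (4) take $\tilde G$ to be this group, $Z$ its (finite cyclic $p'$, after quotienting) central kernel, and $\chi$ the linear character through which $\alpha$ is detected, and verify $k_\alpha G\cong k\tilde G e$ as in Lemma~\ref{le:twistcentral}'s statement.

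The step I expect to be the main obstacle is (3): ensuring simultaneously that $Z$ is cyclic, that it is a $p'$-group, that it lies in $[\tilde G,\tilde G]$, and that the linear character $\chi$ detecting $\alpha$ remains well-defined and faithful on it. Cyclicity and the $p'$-condition are easy to maintain if one is careful to quotient $D$ down to a cyclic $p'$-group of the correct order before forming $\tilde G$ (the order of $\alpha$ suffices). The containment $Z\subseteq[\tilde G,\tilde G]$ is the genuinely new input and is exactly what the symmetry condition $f_{\alpha,\beta}=f_{\beta,\alpha}$ from Lemma~\ref{le:coh} is designed to deliver: symmetry means the two lifts $v_\alpha v_\beta$ and $v_\beta v_\alpha$ differ by an element of $D$ that is \emph{symmetric} in $\alpha,\beta$, and combined with the fact that in any central extension the commutator pairing $[v_\alpha,v_\beta]$ is alternating, one forces the "symmetric part" of the kernel to be split off, leaving the residual kernel inside the commutator subgroup. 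I would carry this out by an explicit presentation argument: generators $v_\alpha$ ($\alpha\in G$) and $D$, relations $v_\alpha v_\beta = f_{\alpha,\beta} v_{\alpha\beta}$, then compute $[\tilde G,\tilde G]$ directly and show it contains the image of $D$ needed to see $\chi$. Once this is in place, the isomorphism $k_\alpha G\cong k\tilde G e$ is the standard idempotent-truncation computation and is routine.
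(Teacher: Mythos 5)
The first half of your proposal is sound and matches the paper: since $k^\times$ is divisible, $\alpha$ (of finite order $m$ prime to $p$) lifts to a class in $H^2(G,\mu_m)$, giving a central extension with finite cyclic $p'$ kernel, and the identification $k_\alpha G\cong k\tilde Ge$ is the standard idempotent truncation. The genuine gap is in your step (3), the containment $Z\subseteq[\tilde G,\tilde G]$, and the tool you reach for there cannot do the job. Lemma~\ref{le:coh} is stated and proved only for short exact sequences of \emph{abelian} groups, so it does not apply to the central extension $1\to\mu_n\to G^\sharp\to G\to 1$ for general finite $G$; and, more importantly, its conclusion points in exactly the wrong direction. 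In a central extension of an abelian group by an abelian group one has $[v_\alpha,v_\beta]=f_{\alpha,\beta}f_{\beta,\alpha}^{-1}$, so a \emph{symmetric} cocycle forces the total group to be abelian and kills all commutators: symmetry characterises the $\Ext^1$ (abelian-extension) part of $H^2$, which is the orthogonal complement of the stem condition you need, not the mechanism that delivers it. (In the paper, Lemma~\ref{le:coh} is used for an entirely different purpose, namely normalising the elements $g_{i,\phi}$ in Lemma~\ref{le:quad-rel}(iii).) Your fallback of simply citing the existence of representation-group (stem) extensions also does not immediately close the gap, because the Schur multiplier $H_2(G,\bZ)$ need not be cyclic, and you must produce a stem extension whose kernel is the specific cyclic group $\mu_m$ on which $\chi$ is faithful.

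What the paper actually does at this point is the following. The universal coefficient sequence gives $H^2(G,k^\times)\cong\Hom(H_2(G,\bZ),k^\times)$ (the $\Ext^1$ term vanishes by divisibility), and an element of order $m$ in this Hom group has image exactly $\mu_m$, hence can be lifted to a \emph{surjective} homomorphism $H_2(G,\bZ)\to\mu_m$; one takes $\tilde\alpha\in H^2(G,\mu_m)$ mapping to it and the corresponding central extension. Then, via a free presentation $1\to R\to F\to G\to 1$ and the Hopf formula, the induced map $(R\cap[F,F])/[F,R]\to Z$ is precisely this surjection and factors through $Z\cap[\tilde G,\tilde G]$, forcing $Z\subseteq[\tilde G,\tilde G]$. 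You would need to replace your step (3) with an argument of this kind (choosing the lift so that the associated homomorphism on the Schur multiplier is onto $\mu_m$) for the proof to go through.
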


\begin{proof}   
This is well known, but for completeness we provide a proof. Let $m$ be 
the order of the cohomology class $\alpha$. Since $k$ is algebraically 
closed, $k^\times$ is a divisible group. So we have a short exact sequence
\[ 0 \to \mu_m \to k^\times \to k^\times \to 0. \]
Consider the corresponding maps of universal coefficient sequences
\[ \xymatrix{0 \ar[r] & \Ext^1(H_1(G,\bZ),\mu_m) \ar[r]\ar[d] & 
H^2(G,\mu_m) \ar[r]\ar[d] &
\Hom(H_2(G,\bZ),\mu_m) \ar[r]\ar[d] & 0\\
0 \ar[r] & \Ext^1(H_1(G,\bZ),k^\times) \ar[r]\ar[d] 
& H^2(G,k^\times) \ar[r]^(0.4)\cong\ar[d]^m &
\Hom(H_2(G,\bZ),k^\times) \ar[r]\ar[d]^m & 0\\
0\ar[r] & \Ext^1(H_1(G,\bZ),k^\times)\ar[r]&H^2(G,k^\times)\ar[r]^(0.4)\cong
&\Hom(H_2(G,\bZ),k^\times)\ar[r]&0} \]
We have $\Ext^1(H_1(G,\bZ),k^\times)=0$ since $k^\times$ is divisible, 
so $H^2(G,k^\times)\to \Hom(H_2(G,\bZ),k^\times)$ is an isomorphism. 
Since $\alpha$ has order $m$, its image in $\Hom(H_2(G,\bZ),k^\times)$ 
lifts to a \emph{surjective} element of order $m$
in $\Hom(H_2(G,\bZ),\mu_m)$. 
An inverse image $\tilde\alpha\in H^2(G,\mu_m)$ again has order $m$. Let 
\[ 1 \to Z \to \tilde G \to G \to 1 \]
be a central extension corresponding to $\tilde\alpha$, with $Z\cong\mu_m$.

Now choose a presentation of $G$ by generators and relations
\[ 1 \to R \to F \to G \to 1. \]
By freeness, the identity map on $G$ lifts to a map $F \to \tilde G$.
This map sends $R$ into $Z$, and $[F,F]$ into $[\tilde G,\tilde G]$.
It has $[F,R]$ in its kernel since $\mu_m$ is central. This gives us a map
\[ H_2(G,\bZ) = (R\cap [F,F])/[F,R] \to Z \]
which is the image of $\alpha$ in $\Hom(H_2(G,\bZ),\mu_m)$.
This map is surjective, but it lands in $Z\cap [\tilde G,\tilde G]$
and hence $Z\subseteq [\tilde G,\tilde G]$.

The formula for the idempotent $e$ can be found in the statement and 
proof of Th\'evenaz \cite[Chapter 2, Proposition 10.5] {Thevenaz:1995a}. 
The linear character $\chi$ sends each element $z\in Z$ to its image 
under $Z\cong\mu_m\hookrightarrow k^\times$.
\end{proof}

Let $P$ be an abelian $p$-group,  $L$ an abelian $p'$-subgroup  of   
$\Aut(P)$  and $\alpha \in  H^2( L, k^{\times} ) $.  Let $Z$, $\tilde G$ 
and $ \chi $ be as in the conclusion of Lemma~\ref{le:twistcentral}  
applied to $G= P \rtimes  L $  and  with $\alpha $ regarded as an 
element of $H^2(G, k^{\times}) $ via the pull back along $G \to G/P $.  
Let $ H$ be the full inverse image of $L$ in $\tilde G$.   Then 
$ \tilde G =  P \rtimes H $ and $ e$  is a central idempotent of $H$.

We have a natural  homomorphism 
\[ \rho\colon H \to \Hom(H,k^\times) \]
sending $g$ to $\rho(g)\colon h \to \chi[g,h]$. The kernel of
this map is $Z  (H)  $ and   the image is     $\Hom(H/Z(H), k^\times)$. 
We denote by 
\[ \bar\rho\colon H/Z(H)  \to \Hom(H/Z, k^\times) \]
the induced  isomorphism. 

Now  $P/\Phi(P)$ is naturally a faithful $\bF_pL$-module.
The extension of scalars $k\otimes_{\bF_p}P/\Phi(P)$
gives a $kL$-module isomorphic to $J(kP)/J^2(kP)$.
Let $\psi$ be the character of $kL$ on this module, and write
\[ \psi=\bigoplus_{i=1}^r \psi_i \]
where $r$ is the rank of $P/\Phi(P)$ and the $\psi_i$ are 
one dimensional $kL$-modules (there may be repetitions).
We choose an $H$-invariant complement $W$ for $J^2(kP)$
in $J(kP)$, and a basis $w_i$ of $W$ so that for $g \in H$
we have
\begin{equation}\label{eq:gwg^-1} 
gw_ig^{-1}=\psi_i(g)w_i. 
\end{equation}
Since a $p'$-group of automorphisms of an abelian $p$-group
preserves some decomposition into homocyclic summands (see for example 
Chapter~5, Theorem~2.2 in Gorenstein~\cite{Gorenstein:1968a}),
we may assume that 
\begin{align*}
kP&=k[w_1,\dots,w_r]/(w_1^{p^{n_1}},\dots,w_r^{p^{n_r}}) 
\end{align*}
with $n_1\ge \cdots \ge n_r$ and $|P|=p^{n_1+\dots+n_r}$.
Thus we have relations
\begin{equation}\label{eq:wipni}
w_i^{p^{n_i}}=0. 
\end{equation}

Applying the results of Section~\ref{se:class2},
the irreducible characters $\tau_\phi$ of $H$ lying over $\chi$
are in one to one correspondence with the one dimensional
characters $\phi$ of $Z(H)$ lying over $\chi$.
The corresponding central idempotents are
\begin{equation}\label{eq:ephi} 
e_\phi =\frac{1}{|Z(H)|} \sum_{h\in Z(H)}\phi(h^{-1})h. 
\end{equation}

Choose one of these, say $\tau=\tau_{\phi_0}$, and choose a matrix 
representation $T_{\phi_0}\colon H \to \Mat_m(k)$ affording $\tau_{\phi_0}$.
Then for each $\phi$ choose a one dimensional representation $\xi_\phi$
of $H$ whose restriction to $Z(H)$ is $\phi\phi_0^{-1}$ (and hence
whose restriction to $Z$ is trivial) and chosen so that $\xi_{\phi_0}=1$.
We assume that these $\xi_\phi$ have been chosen, one for each $\phi$,
and we define $T_\phi\colon H \to \Mat_m(k)$ via $T_\phi(h)=
\xi_\phi(h).T_{\phi_0}(h)$.
Then $T_\phi$ is a matrix representation affording $\tau_\phi$. So the map
\begin{align*} 
kHe &\to \Mat_m(k) \times \dots \times \Mat_m(k) & (|Z(H):Z|\text{ copies}) \\
he & \mapsto (T_{\phi_0}(h),\dots, T_\phi(h),\dots)
\end{align*}
is an isomorphism. Elements of $kHe$ of the form 
$\sum_\phi \xi_\phi^{-1}(h)e_\phi.h$ are sent to diagonal elements 
$(T_{\phi_0}(h),\dots,T_{\phi_0}(h))$, and therefore
span a copy of $\Mat_m(k)$ in $kHe$ containing $e$ as its identity 
element. Let us write $\fM$ for this subalgebra of $e.kH$.

Now for each $\psi_i$ and each $\phi$, the character 
$\phi.(\psi_i|_{Z(H)})$ is some $\phi'$, which we denote $\phi\psi_i$ 
for convenience. So $\xi_{\phi\psi_i}\xi_\phi^{-1}\psi_i^{-1}$ is 
trivial on $Z(H)$. Thus there exists an element $g_{i,\phi}\in H$
such that 
\begin{equation}\label{eq:xi-rho} 
\rho(g_{i,\phi})=\xi_{\phi\psi_i}\xi_\phi^{-1}\psi_i^{-1}, 
\end{equation}
where $\rho(g_{i,\phi})(h)=\chi([g_{i,\phi},h])$. We choose such 
elements $g_{i,\phi}$,  one for each $\psi_i$ and $\phi$.

On the other hand, using~\eqref{eq:gwg^-1} and~\eqref{eq:ephi},  we have
\[ \sum_{h\in Z(H)}\phi(h^{-1})w_ih = 
\sum_{h\in Z(H)}\phi(h^{-1})\psi_i(h^{-1})hw_i   \]
and so 
\begin{equation}\label{eq:we}
w_i e_\phi = e_{\phi\psi_i}w_i.
\end{equation}

\begin{lemma}
For $h\in H$ we have
\[ (g_{i,\phi}w_i)(\xi_\phi(h)^{-1}e_\phi .h) = 
(\xi_{\phi\psi_i}(h)^{-1}e_{\phi\psi_i}.h)(g_{i,\phi}w_i). \]
Thus $g_{i,\phi}w_ie_\phi=e_{\phi\psi_i}g_{i,\phi}w_i$ commutes with $\fM$.
\end{lemma}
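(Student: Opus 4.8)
The identity to be established is an equality between two specific elements of $kHe$, written in terms of the basis elements $w_i$ of the $H$-invariant complement $W$, the group elements $g_{i,\phi}$, the idempotents $e_\phi$, and the one-dimensional characters $\xi_\phi$. The natural strategy is a direct computation: expand both sides using the multiplicativity relations already set up, namely \eqref{eq:gwg^-1} for moving $w_i$ past group elements, \eqref{eq:we} for moving $w_i$ past the idempotents $e_\phi$, the centrality of the idempotents $e_\phi$ in $kH$, and the defining property \eqref{eq:xi-rho} of $g_{i,\phi}$, which controls how the conjugation action of $g_{i,\phi}$ compares the characters $\xi_{\phi\psi_i}$ and $\xi_\phi\psi_i$.

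First I would rewrite the left-hand side. We have $(g_{i,\phi}w_i)(\xi_\phi(h)^{-1}e_\phi h) = \xi_\phi(h)^{-1}\, g_{i,\phi}\, (w_i e_\phi)\, h = \xi_\phi(h)^{-1}\, g_{i,\phi}\, e_{\phi\psi_i}\, w_i\, h$ using \eqref{eq:we}, and then $w_i h = \psi_i(h)^{-1} h w_i$ by \eqref{eq:gwg^-1} (applied as $h w_i h^{-1} = \psi_i(h) w_i$). Since $e_{\phi\psi_i}$ is central in $kH$ it commutes with $g_{i,\phi}$, so the left side becomes $\xi_\phi(h)^{-1}\psi_i(h)^{-1}\, e_{\phi\psi_i}\, g_{i,\phi}\, h\, w_i$. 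On the right-hand side, $(\xi_{\phi\psi_i}(h)^{-1} e_{\phi\psi_i} h)(g_{i,\phi} w_i) = \xi_{\phi\psi_i}(h)^{-1}\, e_{\phi\psi_i}\, (h g_{i,\phi} h^{-1})\, h\, w_i$. So after cancelling the common factor $e_{\phi\psi_i}\cdot(\,\cdot\,)\cdot h w_i$, the asserted identity reduces to showing
\[ \xi_\phi(h)^{-1}\psi_i(h)^{-1}\, g_{i,\phi} = \xi_{\phi\psi_i}(h)^{-1}\, h g_{i,\phi} h^{-1} \]
inside $kHe$, equivalently $h g_{i,\phi} h^{-1} = \bigl(\xi_{\phi\psi_i}(h)\xi_\phi(h)^{-1}\psi_i(h)^{-1}\bigr)\, g_{i,\phi}$ after multiplying by $e$. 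Here is where \eqref{eq:xi-rho} enters: $h g_{i,\phi} h^{-1} = g_{i,\phi}\,[g_{i,\phi}^{-1}, h^{-1}]^{-1}$-type manipulation shows $h g_{i,\phi} h^{-1} = g_{i,\phi}\cdot [g_{i,\phi}, h]^{-1}\cdot(\text{correction})$; more cleanly, $h g_{i,\phi} h^{-1}\, g_{i,\phi}^{-1} = [h, g_{i,\phi}] = [g_{i,\phi}, h]^{-1}$, and applying $e$ (on which $z$ acts by $\chi(z)$ for $z\in Z$, and $[g_{i,\phi},h]\in Z$ modulo... — one checks it lies where $\chi$ is defined) turns $[g_{i,\phi},h]$ into the scalar $\chi([g_{i,\phi},h]) = \rho(g_{i,\phi})(h) = \xi_{\phi\psi_i}(h)\xi_\phi(h)^{-1}\psi_i(h)^{-1}$. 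That is exactly the required scalar, so the two sides agree in $kHe$.

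For the second sentence of the lemma, once the first identity holds, set $h$ to range over $Z(H)$: then $\xi_\phi|_{Z(H)} = \phi\phi_0^{-1}$ and $\xi_{\phi\psi_i}|_{Z(H)} = (\phi\psi_i)\phi_0^{-1}$, so the scalar $\xi_{\phi\psi_i}(h)\xi_\phi(h)^{-1}\psi_i(h)^{-1}$ becomes $\psi_i(h)\psi_i(h)^{-1} = 1$ for $h\in Z(H)$, giving $h$ commutes with $g_{i,\phi}$ for such $h$; combined with \eqref{eq:we} this yields $g_{i,\phi}w_i e_\phi = e_{\phi\psi_i} g_{i,\phi} w_i$, and the displayed identity shows this element commutes with every generator $\xi_\phi(h)^{-1}e_\phi h$ of $\fM$ (recalling $\fM$ is spanned, as $\phi$ and $h$ vary appropriately, by the diagonal elements $\sum_\phi \xi_\phi(h)^{-1}e_\phi h$ — one must sum over $\phi$ at the end and note the left side lands in the $e_{\phi\psi_i}$-component while matching the $\psi_i$-shifted summand on the right, so the sums agree).

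**Main obstacle.** The bookkeeping of which idempotent indexes which summand — i.e. the shift $\phi\mapsto\phi\psi_i$ — is the only place where care is genuinely needed; in particular, when one sums over $\phi$ to land inside $\fM$, one must check that reindexing the sum on one side by $\psi_i$ matches the other side, and that the element $[g_{i,\phi},h]$ really lies in the subgroup $Z$ on which $\chi$ is defined (this follows because $\rho(g_{i,\phi})$ was defined precisely via $h\mapsto\chi([g_{i,\phi},h])$, so membership is built in). Everything else is a mechanical application of \eqref{eq:gwg^-1}, \eqref{eq:we}, \eqref{eq:xi-rho}, and the centrality of the $e_\phi$.
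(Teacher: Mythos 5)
Your proposal is correct and follows essentially the same route as the paper: expand both sides using \eqref{eq:gwg^-1}, \eqref{eq:we} and the centrality of the $e_\phi$, reduce to the commutator scalar $\chi([g_{i,\phi},h])$ acting on $e$, and identify it via \eqref{eq:xi-rho}; the second claim is then obtained exactly as in the paper by summing over $\phi'$ and reindexing by the shift $\phi\mapsto\phi\psi_i$. The only caveat is a convention-dependent ambiguity of whether $\chi([g_{i,\phi},h])$ or its inverse appears (your reduction actually calls for the inverse of the scalar you quote from \eqref{eq:xi-rho}), but the paper's own proof is equally loose on this point, and your observation that $g_{i,\phi}w_ie_\phi=e_{\phi\psi_i}g_{i,\phi}w_i$ needs only \eqref{eq:we} plus centrality (not the detour through $h\in Z(H)$) is fine.
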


\begin{proof}
Scalars commute with everything, and the $e_\phi$, being in $Z(kH)$, 
commute with all but the $w_i$.
By~\eqref{eq:gwg^-1} we have $hw_i=\psi_i(h)w_ix$, and by~\eqref{eq:we}
we have $w_ie_\phi=e_{\phi\psi_i}w_i$. 
We are in $kGe$, and $eg_{i,\phi}h=e\chi([g_{i,\phi},h])^{-1}hg_{i,\phi}$.
Putting these together gives
\[ \xi_\phi(x)^{-1}(g_{i,\phi}w_i)(e_\phi.h) = 
\xi_{\phi}(h)^{-1}\psi_i(h)^{-1}\chi([g_{i,\phi},h])^{-1}
(e_{\phi\psi_i}h)(g_{i,\phi}w_i). \]
Finally, applying~\eqref{eq:xi-rho}, the scalar on the right hand side is equal
to $\xi_{\phi\psi_i}(h)^{-1}$.

For the final statement, we have
\begin{align*}
(g_{i,\phi}w_ie_\phi)\left(\sum_{\phi'}\xi_{\phi'}^{-1}(h)e_{\phi'}.h\right) 
&=(g_{i,\phi}w_i)(\xi_\phi^{-1}(h)e_\phi.h) \\
&=(\xi_{\phi\psi_i}^{-1}(h)e_{\phi\psi_i}.h)(g_{i,\phi}w_i) \\
&=\left(\sum_{\phi'}
\xi_{\phi'}^{-1}(h)e_{\phi'}.h\right)(e_{\phi\psi_i}g_{i,\phi}w_i).
\qedhere
\end{align*}
\end{proof}

\begin{defn}
Let $\fA$ be the subalgebra of $kHe$ generated by the elements
$e_\phi$ and $g_{i,\phi}w_ie_\phi$. Thus by the lemma, $\fA$ and
$\fM$ commute.
\end{defn}

We claim that $\fA$ is a basic algebra of dimension 
$|P|\cdot |H : Z(H)|$, and that multiplication in $kHe$ induces an 
isomorphism
\[ \fA \otimes_k \fM \to kHe, \]
so that $kHe \cong \Mat_m(\fA)$. For this purpose, we shall
use the following.

\begin{lemma}\label{le:Bass}
Let $A\le B$ be $k$-algebras with $A$ an Azumaya algebra
(i.e., a finite dimensional central separable $k$-algebra). Then the
map $A \otimes_k C_B(A) \to B$ is an isomorphism.
\end{lemma}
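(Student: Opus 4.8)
The plan is to prove Lemma~\ref{le:Bass} by exploiting the standard structure theory of Azumaya (central separable) algebras, in particular the fact that for such $A$ the multiplication map $A \otimes_k A^{\mathrm{op}} \to \End_k(A)$ is an isomorphism, so that $A$ is a projective generator as an $A$-$A$-bimodule and every $A$-$A$-bimodule $M$ satisfies $M \cong A \otimes_k M^A$, where $M^A$ denotes the centralizer (the set of elements commuting with $A$). I would first set up $B$ as an $A$-$A$-bimodule by left and right multiplication; this uses that $A$ is a subalgebra of $B$ and that $k$ is central in $B$. Then $C_B(A) = B^A$ is exactly the bimodule-invariants, and the general bimodule decomposition gives a $k$-linear (indeed $A$-$A$-bilinear) isomorphism $A \otimes_k C_B(A) \xrightarrow{\sim} B$; one then checks that this map coincides with the multiplication map $a \otimes c \mapsto ac$, which is the content we want.

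Concretely, the key steps, in order, are: (i) recall that an Azumaya $k$-algebra $A$ is separable, hence $A$ is projective over its enveloping algebra $A^e = A \otimes_k A^{\mathrm{op}}$, and in fact (being central) $A^e \cong \End_k(A)$ via $a \otimes a' \mapsto (x \mapsto a x a')$, so $A$ is a progenerator for $A^e$-modules and Morita theory applies; (ii) invoke the resulting equivalence between $A^e$-modules (= $A$-$A$-bimodules) and $k$-modules, under which an $A^e$-module $M$ corresponds to $\Hom_{A^e}(A, M) \cong M^A = C_?(M)$, the centralizer of $A$ in $M$, and the counit of the adjunction $A \otimes_k M^A \to M$, $a \otimes m \mapsto am$, is an isomorphism; (iii) apply this with $M = B$, noting $B$ is an $A$-$A$-bimodule via multiplication and that $B^A = C_B(A)$ by definition of the centralizer, to get that $A \otimes_k C_B(A) \to B$, $a \otimes c \mapsto ac$, is bijective; (iv) observe this map is an algebra homomorphism when $C_B(A)$ is given its natural algebra structure, because $A$ and $C_B(A)$ commute elementwise inside $B$, so it is an algebra isomorphism.

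The main obstacle — or rather the main thing to get right — is step (ii): pinning down that under the Morita equivalence induced by the progenerator $A$ over $A^e$, the bimodule $B$ really does correspond to $C_B(A)$ and the structure map is literally the multiplication map, not merely some abstract isomorphism. This is where one must be careful that "separable" is being used in the strong sense (projectivity of $A$ over $A^e$), that $k$ is a field so there are no flatness subtleties, and that the identification $\Hom_{A^e}(A, B) \cong C_B(A)$ sends a bimodule map $f$ to $f(1)$, which automatically lies in the centralizer. A clean alternative that sidesteps explicit Morita language: choose a separability idempotent $e = \sum_j x_j \otimes y_j \in A \otimes_k A^{\mathrm{op}}$ with $\sum_j x_j y_j = 1$ and $a e = e a$ for all $a \in A$; then define $B \to A \otimes_k C_B(A)$ by $b \mapsto \sum_j x_j \otimes y_j b$ — using centrality of $A$ one checks each $\sum_j y_j b \cdots$ lands in the centralizer after the appropriate twist — and verify directly that this is a two-sided inverse to multiplication. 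I expect the separability-idempotent computation to be the only place requiring actual calculation, and it is short; everything else is formal.
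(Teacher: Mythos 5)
Your argument is correct, but note that the paper does not actually prove this lemma: its ``proof'' is a one-line citation to Bass (Chapter 3, Corollary 4.3 of \cite{Bass:1967a}). What you have written is essentially the standard proof of that cited result, so you are supplying the argument the paper outsources. Your main route is sound: since $A$ is central separable and $k$ is a field, $A^e=A\otimes_k A^{\mathrm{op}}\cong \mathrm{End}_k(A)$ is a matrix algebra over $k$, $A$ is a progenerator as an $A^e$-module with $\mathrm{Hom}_{A^e}(A,M)\cong M^A$ via $f\mapsto f(1)$, and the counit $A\otimes_k M^A\to M$, $a\otimes m\mapsto am$, is an isomorphism; taking $M=B$ (a unital $A^e$-module precisely because $1_A=1_B$, the point of the paper's remark following the lemma) gives the bijection, and multiplicativity follows since $A$ and $C_B(A)$ commute elementwise. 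Two small points to tighten: first, make explicit where you use $1_A=1_B$, since otherwise $B$ is not a unital $A^e$-module and the Morita formalism does not apply as stated; second, in your separability-idempotent alternative the map $b\mapsto \sum_j x_j\otimes y_jb$ is not quite right as written, because $y_jb$ need not lie in $C_B(A)$ term by term --- you need to insert the projection $E(b)=\sum_j x_jby_j$ onto the centralizer, i.e.\ use $b\mapsto \sum_j x_j\otimes E(y_jb)$, and then the verification that this inverts multiplication is the short computation you allude to. With those caveats the proof is complete, and it is arguably more self-contained than the paper's bare reference.
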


\begin{proof}
See for example Chapter 3, Corollary 4.3, in Bass~\cite{Bass:1967a}.
\end{proof}

\begin{rk}
Note that the hypotheses of the lemma include the statement that
the identity element of $A$ is equal to the identity element of $B$.
\end{rk}

We display
$\fA$ as $kQ/I$ where $Q$ is a quiver and $I\le J^2(kQ)$ is an ideal 
of relations. The quiver $Q$ has $|Z(H):Z|$ vertices labelled $[\phi]$ 
corresponding to the idempotents $e_\phi\in kZ(H)$ lying over $\chi$, 
and directed edges labelled with the $w_i$ corresponding to 
\[g_{i,\phi}w_ie_\phi=e_{\phi\psi_i}g_{i,\phi}w_i=
e_{\phi\psi_i}g_{i,\phi}w_ie_\phi. \]
going from $[\phi]$ to $[\phi\psi_i]$.
For brevity, we can illustrate these vertices and directed edges as
\[ [\phi] \xrightarrow{\quad i\quad} [\phi\psi_i] \]

\begin{lemma}\label{le:quad-rel}
\begin{enumerate}

\item
For suitable elements $z_{i,j,\phi}\in Z(H)$, we have
\[ g_{j,\phi\psi_i}\,g_{i,\phi}=
g_{i,\phi\psi_j}\,g_{j,\phi}\,z_{i,j,\phi}. \]

\item
The following relations
hold in $\fA$:
\[ (g_{j,\phi\psi_i}w_je_{\phi\psi_i})(g_{i,\phi}w_ie_\phi) =
q_{i,j,\phi}(g_{i,\phi\psi_j}w_ie_{\phi\psi_j})(g_{j,\phi}w_ie_\phi) \]
where $q_{i,j,\phi}=\phi(z_{i,j,\phi})\in k^\times$.

\item
By changing the choices of $g_{i,\phi}$ by elements of $Z(H)$,
we may ensure that $z_{i,j,\phi}\in Z$ and $q_{i,j,\phi}=
\chi(z_{i,j,\phi})$.

\end{enumerate}
\end{lemma}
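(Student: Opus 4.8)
I would prove the three parts in order, since each feeds into the next. For (i): this is a statement about the group $H$, and it drops out of the homomorphism $\rho\colon H\to\Hom(H,k^\times)$. Its target is abelian and its kernel is $Z(H)$, so two elements $a,b\in H$ satisfy $b^{-1}a\in Z(H)$ exactly when $\rho(a)=\rho(b)$. I apply this with $a=g_{j,\phi\psi_i}g_{i,\phi}$ and $b=g_{i,\phi\psi_j}g_{j,\phi}$: using~\eqref{eq:xi-rho} and multiplicativity of $\rho$, both $\rho(a)$ and $\rho(b)$ come out equal to $\xi_{\phi\psi_i\psi_j}\,\xi_\phi^{-1}\,\psi_i^{-1}\psi_j^{-1}$, the only thing to observe being that $\phi\psi_i\psi_j=\phi\psi_j\psi_i$ because characters of the abelian group $Z(H)$ commute. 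Hence $z_{i,j,\phi}:=b^{-1}a\in Z(H)$.

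For (ii): I would compute directly inside $k\tilde Ge$, where $\tilde G=P\rtimes H$, so that the $w_i\in kP$ and the elements of $H$ are all available. Starting from the left-hand product, the steps are: (a) push the central idempotents to the right, using that each $e_\phi$ lies in $Z(kH)$ and repeatedly invoking~\eqref{eq:we}, so that the string of idempotents telescopes down to a single trailing $e_\phi$; (b) move the $g$'s past the $w$'s by~\eqref{eq:gwg^-1}; (c) use $w_iw_j=w_jw_i$ in $kP$; (d) rewrite $g_{j,\phi\psi_i}g_{i,\phi}$ by part (i) as $g_{i,\phi\psi_j}g_{j,\phi}\,z_{i,j,\phi}$; and (e) slide $z_{i,j,\phi}\in Z(H)$ onto $e_\phi$ via the identity $z\,e_\phi=\phi(z)\,e_\phi$, valid for every $z\in Z(H)$ by the same averaging computation that defines $e_\phi$. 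Running the identical reduction on the right-hand product and comparing then gives the relation with scalar $q_{i,j,\phi}=\phi(z_{i,j,\phi})$. The point that requires care is step (b): one must check that the scalars $\psi_\bullet(g_\bullet)$ thrown off on the two sides cancel against one another, leaving precisely $\phi(z_{i,j,\phi})$, and here one uses that each $\psi_i$, being inflated from $L\cong H/Z$, is trivial on $Z$, together with the antisymmetry $\rho(a)(b)\,\rho(b)(a)=1$ of the commutator pairing.

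For (iii): this is where the real work lies, and I expect it to be the main obstacle. The $g_{i,\phi}$ are only pinned down modulo $\ker\rho=Z(H)$, so we may replace $g_{i,\phi}$ by $g_{i,\phi}t_{i,\phi}$ with $t_{i,\phi}\in Z(H)$; since the $t$'s are central this alters $z_{i,j,\phi}$ by a term built, in coboundary fashion, out of $t_{i,\phi}$, $t_{i,\phi\psi_j}$, $t_{j,\phi}$, $t_{j,\phi\psi_i}$. Reduced modulo $Z$, the family $(\bar z_{i,j,\phi})$ satisfies a $2$-cocycle condition — coming from the consistency of the relations around threefold products $w_iw_jw_l$ — with values in the $p'$-group $Z(H)/Z$, and one wants to show this class is a coboundary. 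I would do this by the cohomological lifting argument of Lemma~\ref{le:coh}, applied to a suitable short exact sequence manufactured from $1\to Z\to Z(H)\to Z(H)/Z\to 1$ together with a surjection onto $Z(H)$, which produces the required $t_{i,\phi}$ and makes the new $z_{i,j,\phi}$ lie in $Z$. Once $z_{i,j,\phi}\in Z$ we have $\phi|_Z=\chi$ (as $\phi$ covers $\chi$), so part (ii) gives $q_{i,j,\phi}=\phi(z_{i,j,\phi})=\chi(z_{i,j,\phi})$. The delicate step is precisely this normalization: identifying the correct cocycle and the exact short exact sequence to which Lemma~\ref{le:coh} should be applied.
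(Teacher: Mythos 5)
Parts (i) and (ii) of your argument are essentially the paper's. Part (i) is exactly the computation with $\rho$: multiplicativity, symmetry in $i,j$ of $\xi_{\phi\psi_i\psi_j}\xi_\phi^{-1}\psi_i^{-1}\psi_j^{-1}$, and $\ker\rho=Z(H)$. For (ii) the paper's whole proof is the identity $z_{i,j,\phi}\,e_\phi=\phi(z_{i,j,\phi})\,e_\phi$, which is the core of your reduction; your additional worry about the scalars $\psi_j(g_{i,\phi})$, $\psi_i(g_{j,\phi})$ thrown off when commuting the $w$'s past the $g$'s is legitimate, but the cancellation you assert via ``antisymmetry of the commutator pairing'' is not justified by anything you write (neither $\psi_j(g_{i,\phi})$ nor $\psi_i(g_{j,\phi})$ is a value of the pairing $\rho$); this is a bookkeeping point the paper itself treats tersely, and what is ultimately used later is only that $q_{i,j,\phi}\in k^\times$.

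The genuine gap is (iii), which you yourself flag as unresolved. Your plan is: take arbitrary $g_{i,\phi}$, record the obstruction $\bar z_{i,j,\phi}\in Z(H)/Z$, claim it is a $2$-cocycle, and kill it by applying Lemma~\ref{le:coh} to a sequence built from $1\to Z\to Z(H)\to Z(H)/Z\to 1$. This is not a proof: you never specify for which group and coefficients the family $(\bar z_{i,j,\phi})$ (indexed by two arrow types $i,j$ and a vertex $\phi$) is a cocycle, and, more importantly, Lemma~\ref{le:coh} is not a vanishing statement --- it produces a \emph{symmetric} $2$-cocycle with prescribed image under $\pi$ (it lifts an extension class); it does not show that a given cocycle is a coboundary, and for abelian groups symmetric classes correspond to $\Ext^1$, which need not vanish, so no coboundary conclusion can come from that lemma. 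The paper sidesteps all of this by constructing good $g_{i,\phi}$ from scratch rather than correcting arbitrary ones: it applies Lemma~\ref{le:coh} on the character side, to the exact sequence $1\to \mathrm{Irr}(H/Z(H))\to\mathrm{Irr}(H/Z)\to\mathrm{Irr}(Z(H)/Z)\to 1$ given by restriction, with $D=H/Z$, with $\pi$ the composite of $H/Z\to H/Z(H)$ with $\bar\rho$, and with lifts $u_\alpha=\xi_{\alpha\phi_0}$. This yields a symmetric $2$-cocycle $f$ with values in $H/Z$; setting $g_{i,\phi}=g_i\,\tilde f_{\psi_i,\phi\phi_0^{-1}}$, where $\rho(g_i)=\psi_i^{-1}u_{\psi_i}$ and $\tilde f$ lifts $f$ to $H$, one checks that these elements have the required $\rho$-values, and then the cocycle identity together with the symmetry $f_{\alpha,\beta}=f_{\beta,\alpha}$ gives $g_{j,\phi\psi_i}g_{i,\phi}\equiv g_{i,\phi\psi_j}g_{j,\phi} \pmod Z$, i.e.\ $z_{i,j,\phi}\in Z$, whence $q_{i,j,\phi}=\chi(z_{i,j,\phi})$ because $\phi$ restricts to $\chi$ on $Z$. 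The missing idea in your proposal is precisely this dualization and the use of the symmetry of $f$; without it your outline for (iii) does not go through.
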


\begin{proof}
(i) By \eqref{eq:xi-rho}, we have
\begin{align*}
\rho(g_{j,\phi\psi_i}g_{i,\phi})&=\rho(g_{j,\phi\phi_i})\rho(g_{i,\phi}) \\
&= (\xi_{\phi\psi_i\psi_j}\xi_{\phi\psi_i}^{-1}\psi_j^{-1})(\xi_{\phi\psi_i}\xi_{\phi}^{-1}\psi_i^{-1}) \\
&= \xi_{\phi\psi_i\psi_j}\xi_{\phi}^{-1}\psi_j^{-1}\psi_i^{-1}.
\end{align*}
This is symmetric in $i$ and $j$, and so
\[ \rho(g_{j,\phi\psi_i}g_{i,\phi})=\rho(g_{i,\phi\psi_j}g_{j,\phi}). \]
Since the kernel of $\rho$ is $Z(H)$ it follows that for some element
$z_{i,j,\phi}\in Z(H)$ we have 
\[ g_{j,\phi\psi_i}\,g_{i,\phi}=g_{i,\phi\psi_j}\,g_{j,\phi}\,z_{i,j,\phi}. \]

(ii) This follows from the fact that we have 
$z_{i,j,\phi}\,e_\phi = \phi(z_{i,j,\phi})e_\phi$.

(iii) We apply Lemma \ref{le:coh} with $A =\mathrm{Irr}(H/Z(H))$, 
$B=\mathrm{Irr}(H/Z)$, $C = \mathrm{Irr}(Z(H)/Z)$, the map from $B$ to 
$C$  the  restriction map, $ D=  H/Z $, $ \pi $ the composition of  
the natural surjection  $ H/Z  \to  H/Z(H)$    with   $\bar  \rho$  and  
$ u_{\alpha} =   \xi_{ \alpha \phi_0} $,  
$\alpha \in \mathrm{Irr}  (Z(H)/Z)  $.    Let   
$ f_{\alpha, \beta }   \in    H/Z $  be as in  the conclusion of the 
lemma, and let $\tilde f_{\alpha, \beta}  \in H$ be any    lift  of 
$ f_{\alpha, \beta} $  to $H$.    
Denote   also   by $\psi_i$  the restriction of $\psi_i $ to  $ Z(H) $.  
So $ \psi_i^{-1} u_{\psi _i}  $ is an element of 
$  \mathrm{Irr}  (H/Z(H)) $. Choose an element $ g_{i} \in H $ such that 
$\rho (g_i) =   \psi_i^{-1} u_{\psi _i}  $   and   set  
\[g_{i, \phi} =    g_i  \tilde  f_{ \psi _i,   \phi  \phi_0^{-1}}  .\]
Then \begin{align*}\rho (g_{i, \phi}) & =  \psi_i^{-1} u_{\psi _i}  
\rho (\tilde f_{ \psi _i,   \phi  \phi_0^{-1}})\\
&=  \psi_i^{-1} u_{\psi _i} u_{\psi_i}^{-1} u_{ \phi \phi_0^{-1} }^{-1}  
u_{  \psi_i\phi\phi_0^{-1}}\\
&= \psi^{-1} \xi_{\phi}^{-1} \xi_{\psi_i\phi} 
 \end{align*}  and 
\begin{align*} g_{j,\phi\psi_i}\,g_{i,\phi}  Z 
&=  g_j  \tilde  f_{ \psi_j,   \phi  \psi_i\phi_0^{-1}} \,  
g_i\tilde  f_{ \psi _i,   \phi  \phi_0^{-1}} Z\\
& = g_j g_i  \tilde  f_{ \psi_j,   \phi  \psi_i \phi_0^{-1}}
\tilde  f_{ \psi _i,   \phi  \phi_0^{-1}}Z\\
&=  g_j g_i  \tilde  f_{ \psi_j,   \phi  \psi_i \phi_0^{-1}}
\tilde  f_{ \phi  \phi_0^{-1},  \psi _i}Z   \\
&= g_j g_i\tilde f_{\psi_j,  \phi\phi_0^{-1}}
\tilde f_{\psi_j\phi\phi_0^{-1},   \psi_i}\\
&= g_j g_ji\tilde f_{\psi_j,  \phi\phi_0^{-1}}
\tilde f_{ \psi_i, \psi_j\phi\phi_0^{-1}}Z\\
&=g_{i,\phi\psi_j}\,g_{j,\phi}Z.   \qedhere \end{align*}
 \end{proof}

\begin{lemma}\label{le:power-rel}
For each $i$ and each $\phi$ we have
\[ (g_{i,\phi\psi_i^{p^{n_i}-1}}w_ie_{\phi\psi_i^{p^{n_i}-1}})\dots
(g_{i,\phi\psi_i^2}\,w_ie_{\phi\psi_i^2})
(g_{i,\phi\psi_i}\,w_ie_{\phi\psi_i})(g_{i,\phi}\,w_ie_\phi)=0. \]
Here, we have written down the only composable sequence of arrows in 
$Q$ beginning with $e_\phi$ with each arrow involving $w_i$, and there 
are $p^{n_i}$ terms in the product:
\[ [\phi] \xrightarrow{\quad i \quad} [\phi\psi_i] \xrightarrow{\quad i \quad}
[\phi \psi_i^2]\xrightarrow{\quad i \quad} \cdots 
\xrightarrow{\quad i \quad} [\phi\psi_i^{p^{n_i}}]. \]
\end{lemma}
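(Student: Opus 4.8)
The plan is to compute the product of the $p^{n_i}$ arrows directly inside $k\tilde{G}e$, moving every group element to the left of every copy of $w_i$; the product then becomes a nonzero scalar times a collected expression containing the factor $w_i^{p^{n_i}}$, which is $0$ by \eqref{eq:wipni}. To set up notation I would write $\phi_k=\phi\psi_i^k$ and $x_k=g_{i,\phi_k}w_ie_{\phi_k}$, so that the left-hand side of the claimed relation is $x_{p^{n_i}-1}\cdots x_1x_0$ and $x_k$ is the arrow $[\phi_k]\to[\phi_{k+1}]$.

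The key step is an induction on $m$, for $1\le m\le p^{n_i}$, establishing
\[ x_{m-1}\cdots x_1x_0=c_m\,(g_{i,\phi_{m-1}}\cdots g_{i,\phi_1}g_{i,\phi_0})\,w_i^m\,e_{\phi_0} \]
for some $c_m\in k^\times$; the base case $m=1$ is immediate with $c_1=1$. For the inductive step one multiplies the displayed expression on the left by $x_m=g_{i,\phi_m}w_ie_{\phi_m}$. Since $e_{\phi_m}$ lies in $kZ(H)$ it commutes with the elements $g_{i,\phi_j}\in H$, and iterating \eqref{eq:we} gives $e_{\phi_m}w_i^m=w_i^me_{\phi_0}$; hence $e_{\phi_m}$ can be carried all the way to the right, where it is absorbed by the trailing $e_{\phi_0}$ since $e_{\phi_0}^2=e_{\phi_0}$. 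What is left is $c_m\,g_{i,\phi_m}w_i\,(g_{i,\phi_{m-1}}\cdots g_{i,\phi_0})\,w_i^m e_{\phi_0}$, and rewriting \eqref{eq:gwg^-1} as $w_ig=\psi_i(g)^{-1}gw_i$ for $g\in H$ lets one slide the extra $w_i$ past the product of group elements at the cost of the scalar $\prod_{k=0}^{m-1}\psi_i(g_{i,\phi_k})^{-1}\in k^\times$. Setting $c_{m+1}$ equal to $c_m$ times this scalar completes the induction.

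Taking $m=p^{n_i}$ then gives $x_{p^{n_i}-1}\cdots x_0=c_{p^{n_i}}\,(g_{i,\phi_{p^{n_i}-1}}\cdots g_{i,\phi_0})\,w_i^{p^{n_i}}\,e_{\phi_0}$, which vanishes by \eqref{eq:wipni}. I do not expect a real obstacle here: the content of the lemma is just the relation $w_i^{p^{n_i}}=0$ transported into $\fA$, and the only point needing care is the bookkeeping of the idempotents $e_{\phi_k}$ — specifically, checking that when $e_{\phi_m}$ is pushed through $w_i^m$ it arrives at the matching idempotent $e_{\phi_0}$ rather than at an orthogonal $e_{\phi'}$ (which would make the product vanish for the wrong reason, and would in fact kill the shorter paths as well). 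The same computation run only up to $m\le p^{n_i}-1$ shows that no shorter path of $w_i$-arrows from $[\phi]$ vanishes, so $p^{n_i}$ is the correct exponent in this relation.
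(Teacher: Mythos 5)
Your proof is correct and follows the same route as the paper's (much terser) argument: use \eqref{eq:gwg^-1} and \eqref{eq:we} to push all the $w_i$ factors together, tracking only nonzero scalars and the idempotents, and then conclude with $w_i^{p^{n_i}}=0$ from \eqref{eq:wipni}. Your explicit induction and the check that $e_{\phi_m}$ lands on the matching idempotent $e_{\phi_0}$ simply make precise what the paper leaves implicit.
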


\begin{proof}
Relations~\eqref{eq:gwg^-1} and~\eqref{eq:we} allow us to push the $w_i$ 
terms past the other terms so that they are directly multiplied 
together. Then we can then use relation~\eqref{eq:wipni} to conclude 
that we get zero.
\end{proof}

\begin{theorem}\label{th:A}
The relations on the quiver algebra $kQ$ given by
\[ (g_{j,\phi\psi_i}w_je_{\phi\psi_i})(g_{i,\phi}w_ie_\phi) =
q_{i,j,\phi}(g_{i,\phi\psi_j}w_ie_{\phi\psi_j})(g_{j,\phi}w_ie_\phi) \]
and
\[ (g_{i,\phi\psi_i^{p^{n_i}-1}}w_ie_{\phi\psi_i^{p^{n_i}-1}})\dots
(g_{i,\phi\psi_i^2}\,w_ie_{\phi\psi_i^2})
(g_{i,\phi\psi_i}\,w_ie_{\phi\psi_i})(g_{i,\phi}\,w_ie_\phi)=0 \]
as in Lemmas~\ref{le:quad-rel} and~\ref{le:power-rel}
are a complete set of relations among the arrows 
$g_{i,\phi}w_ie_\phi$ of $Q$ to define the quotient $\fA$. 
Thus $\fA\cong kQ/I$ where $I\le J^2(kQ)$ is
the two-sided ideal generated by these relations.
\end{theorem}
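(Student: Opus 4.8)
The plan is to construct a surjective algebra homomorphism $\bar\theta\colon kQ/I\to\fA$ and then show it is injective by matching a spanning set of $kQ/I$ with an equinumerous linearly independent subset of $\fA$. For the homomorphism, recall that the $e_\phi$, as $\phi$ runs over $\mathrm{Irr}(Z(H)\,|\,\chi)$, are pairwise orthogonal idempotents of $kZ(H)$ with sum $e$, and that $g_{i,\phi}w_ie_\phi=e_{\phi\psi_i}g_{i,\phi}w_ie_\phi$, so that this element is an arrow $[\phi]\to[\phi\psi_i]$. Hence there is a unique algebra homomorphism $kQ\to\fA$ sending the vertex idempotent $[\phi]$ to $e_\phi$ and the arrow $[\phi]\xrightarrow{\,i\,}[\phi\psi_i]$ to $g_{i,\phi}w_ie_\phi$, and it is surjective since by definition these elements generate $\fA$. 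The relations in the theorem are differences of paths of length $\geq 2$, hence lie in $J^2(kQ)$, and by Lemmas~\ref{le:quad-rel} and~\ref{le:power-rel} they lie in the kernel; so we obtain a surjection $\bar\theta\colon kQ/I\to\fA$ with $I\le J^2(kQ)$.

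Next I bound $\dim_k(kQ/I)$ from above by straightening. A $k$-basis of $kQ$ consists of the paths, each determined by its source $[\phi]$ and the ordered list of colours $i\in\{1,\dots,r\}$ of its successive arrows. Whenever two consecutive arrows of a path have colours $i$ then $j$ with $i>j$, the quadratic relation replaces this length-two subpath, up to a nonzero scalar, by the one with colours $j$ then $i$ and the \emph{same} source and target --- here one uses that $L$ is abelian, so that $\psi_i\psi_j=\psi_j\psi_i$ --- leaving the rest of the path untouched. Iterating (bubble sort), every path is, modulo $I$, a nonzero scalar times a \emph{sorted} path, having some number $a_i$ of arrows of colour $i$, arranged in nondecreasing order of colour. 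If some $a_i\ge p^{n_i}$, the sorted path contains $p^{n_i}$ consecutive arrows of colour $i$, which is a left-hand side of a relation in Lemma~\ref{le:power-rel}, so the path lies in $I$. Thus $kQ/I$ is spanned by the residues of the sorted paths $p_{\mathbf a,\phi}$ with source $[\phi]$ and $\mathbf a=(a_1,\dots,a_r)$, $0\le a_i<p^{n_i}$, of which there are $|Z(H):Z|\cdot\prod_i p^{n_i}=|P|\cdot|Z(H):Z|$.

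It remains to show that the images $m_{\mathbf a,\phi}:=\bar\theta(p_{\mathbf a,\phi})$ are $k$-linearly independent. View $k\tilde G=k(P\rtimes H)$ as $kP\otimes_k kH$ as a $k$-module. Using $gw_i=\psi_i(g)w_ig$ for $g\in H$ (from~\eqref{eq:gwg^-1}) and $w_ie_\phi=e_{\phi\psi_i}w_i$ (from~\eqref{eq:we}), one moves every $w$ in the product defining $m_{\mathbf a,\phi}$ to the right past the group elements and idempotents, at the cost only of a nonzero scalar (a product of values of the $\psi_i$, which are roots of unity); this gives $m_{\mathbf a,\phi}=\lambda_{\mathbf a,\phi}\,w_1^{a_1}\cdots w_r^{a_r}\otimes g_{\mathbf a,\phi}e_\phi$ with $\lambda_{\mathbf a,\phi}\in k^\times$ and $g_{\mathbf a,\phi}\in H$. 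The monomials $w_1^{a_1}\cdots w_r^{a_r}$ with $0\le a_i<p^{n_i}$ form a $k$-basis of $kP$ by~\eqref{eq:wipni}, the $e_\phi$ are pairwise orthogonal central idempotents of $kH$, and $g_{\mathbf a,\phi}e_\phi\ne0$; so a dependence $\sum c_{\mathbf a,\phi}m_{\mathbf a,\phi}=0$, after projecting onto the $w_1^{a_1}\cdots w_r^{a_r}$ coordinate and multiplying by $e_\phi$, forces $c_{\mathbf a,\phi}\lambda_{\mathbf a,\phi}g_{\mathbf a,\phi}e_\phi=0$ and hence $c_{\mathbf a,\phi}=0$. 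Therefore $\bar\theta$ sends a spanning set of $kQ/I$ to a linearly independent set, so it is injective and hence an isomorphism $\fA\cong kQ/I$; moreover $\dim_k\fA=|P|\cdot|Z(H):Z|$, and since $\fA\subseteq C_{k\tilde Ge}(\fM)$ while Lemma~\ref{le:Bass} gives $\dim_k C_{k\tilde Ge}(\fM)=\dim_k(k\tilde Ge)/m^2=|P|\cdot|Z(H):Z|$, we also deduce $\fA=C_{k\tilde Ge}(\fM)$ and $\fM\otimes_k\fA\cong k\tilde Ge\cong\Mat_m(\fA)$.

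The main obstacle is the straightening in the second step: one must verify that the quadratic relation, with its vertex-dependent subscripts, really does implement an adjacent transposition of colours in every position of every path --- i.e.\ that the source and target of the transposed length-two subpath are preserved, which is where abelianness of $L$ enters --- and that the subsequent reduction using the power relations is an identity in $kQ/I$ (legitimate, as those relations are among the generators of $I$), not merely in $\fA$. The rest is bookkeeping with the decomposition $k(P\rtimes H)=kP\otimes_k kH$ and the orthogonal idempotents $e_\phi$.
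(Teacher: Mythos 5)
Your proof is correct and follows essentially the same route as the paper: the evident surjection $kQ/I\to\fA$, straightening of paths via the quadratic and power relations, and the observation that the resulting monomials $w_1^{a_1}\cdots w_r^{a_r}\cdot(\text{element of }H)\cdot e_\phi$ are nonzero in $k\tilde Ge$. The only difference is in packaging: the paper argues injectivity tersely via the socle of $kQ/I$ (a nonzero kernel would meet the socle, whose elements are maximal sorted products mapping to nonzero elements), whereas you give the straightening explicitly and do a full dimension count matching a spanning set of $kQ/I$ with a linearly independent set in $\fA$ --- an equivalent but more detailed argument, which also yields $\dim_k\fA=|P|\cdot|Z(H):Z|$ as used in the paper's subsequent theorem.
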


\begin{proof}
Using Lemmas~\ref{le:quad-rel} and~\ref{le:power-rel}, we have an 
obvious homomorphism from the algebra $kQ/I$ given by these generators 
and relations to $\fA$. Now $kQ/I$ is a finite dimensional
algebra whose socle elements are products which involve $p^{n_i}-1$ 
arrows of type $i$ for each $i$. Such an element maps to something of 
the form (element of $H$)($w_1^{p^{n_1}-1}\dots w_r^{p^{n_r}-1}e_\phi$) 
in $\fA$, and such an element is non-zero in $kG$.
\end{proof}

\begin{theorem}
The multiplication in $kGe$ induces an isomorphism $\fA\otimes_k\fM\to 
kGe$.
\end{theorem}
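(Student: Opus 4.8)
The plan is to apply Lemma~\ref{le:Bass} (the Azumaya splitting result) to the subalgebras $\fM \le kHe = kGe$, using the identifications built up in the preceding lemmas. First I would check the two hypotheses of Lemma~\ref{le:Bass} for the pair $\fM \le kGe$: that $\fM$ is an Azumaya $k$-algebra, and that the identity element of $\fM$ equals that of $kGe$. The first is immediate since $\fM \cong \Mat_m(k)$, which is a finite dimensional central separable $k$-algebra; the second is precisely the observation already made in the construction, namely that $e$ is the identity element of $\fM$ and also the identity element of $kGe$. Lemma~\ref{le:Bass} then gives that multiplication induces an isomorphism
\[ \fM \otimes_k C_{kGe}(\fM) \to kGe. \]

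The next step is to identify the centralizer $C_{kGe}(\fM)$ with $\fA$. The lemma preceding the definition of $\fA$ shows that each generator $e_\phi$ and $g_{i,\phi}w_ie_\phi$ of $\fA$ commutes with $\fM$, so $\fA \subseteq C_{kGe}(\fM)$; conversely, by the isomorphism just obtained, $\dim_k C_{kGe}(\fM) = \dim_k(kGe)/m^2 = |P|\cdot|H:Z(H)|$, and by Theorem~\ref{th:A} we know $\dim_k \fA = |P|\cdot|H:Z(H)|$ as well (this is the dimension count built into the quiver presentation: the paths not killed by the relations are indexed by the monomials $w_1^{a_1}\cdots w_r^{a_r}$ with $0\le a_i < p^{n_i}$ based at each vertex $[\phi]$). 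Hence the inclusion $\fA \subseteq C_{kGe}(\fM)$ is an equality. Substituting, multiplication induces an isomorphism $\fM \otimes_k \fA \to kGe$, and swapping the tensor factors (legitimate since $\fA$ and $\fM$ commute inside $kGe$) gives the asserted isomorphism $\fA \otimes_k \fM \to kGe$.

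The main obstacle is the dimension count $\dim_k \fA = |P|\cdot|H:Z(H)|$, which is needed to promote the inclusion $\fA \subseteq C_{kGe}(\fM)$ to an equality. One direction, $\dim_k \fA \le |P|\cdot|H:Z(H)|$, follows from the spanning statement implicit in Theorem~\ref{th:A}: every path in $kQ/I$ can be rewritten, using the quadratic relations of Lemma~\ref{le:quad-rel} to reorder arrows of distinct types and the power relations of Lemma~\ref{le:power-rel} to bound the number of arrows of each type, so that $\fA$ is spanned by the $|P|\cdot|H:Z(H)|$ elements described above. The reverse inequality is exactly the nonvanishing argument at the end of the proof of Theorem~\ref{th:A}: the candidate basis element at vertex $[\phi]$ obtained from a monomial $w_1^{a_1}\cdots w_r^{a_r}$ maps in $kGe$ to a scalar times $(\text{element of }H)\cdot w_1^{a_1}\cdots w_r^{a_r} e_\phi$, and since the $w_1^{a_1}\cdots w_r^{a_r}$ with $0\le a_i<p^{n_i}$ form part of a $k$-basis of $kP$ and the $e_\phi$ are orthogonal idempotents, these images are linearly independent in $kGe$. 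So no further work is required beyond assembling these facts; once the dimension of $\fA$ is pinned down, the theorem is a direct application of Lemma~\ref{le:Bass}.
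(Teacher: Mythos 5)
Your argument is correct and essentially the paper's: both hinge on Lemma~\ref{le:Bass} applied with $A=\fM$ together with the dimension of $\fA$; the paper takes $B$ to be the subalgebra generated by $\fA$ and $\fM$ to get injectivity and then compares dimensions, whereas you take $B=kGe$ and identify $C_{kGe}(\fM)$ with $\fA$, which is the same computation in slightly different packaging. One small correction: the common dimension should be $|P|\cdot|Z(H):Z|$ rather than $|P|\cdot|H:Z(H)|$ (your monomial count over the $|Z(H):Z|$ vertices and the quotient $\dim_k(kGe)/m^2=|P|\cdot|H:Z|\,/\,|H:Z(H)|$ both give the former), and since both sides of your comparison are that same quantity, the argument goes through verbatim.
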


\begin{proof}
Applying Lemma~\ref{le:Bass} with $A=\fM$ and $B$ the subalgebra 
generated by $\fA$ and $\fM$, we see that the given map is injective.
The dimensions are given by $\dim(\fA)=|Z(H):Z|\cdot |P|$, 
$\dim(\fM)=|H:Z(H)|$ and $\dim(kGe)=|G:Z|$, so $\dim(kGe)=
\dim(\fA)\cdot \dim(\fM)$ and the map is an isomorphism.
\end{proof}

\begin{cor}
The algebra $kGe$ is isomorphic to $\Mat_m(\fA)$. In particular,
$\fA$ is the basic algebra of $kGe$, and is Morita equivalent to it.\qed
\end{cor}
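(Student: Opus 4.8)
The plan is to read the corollary off directly from the preceding theorem together with the identification of $\fM$. Recall that $\fM$ was defined as the span of the elements $\sum_\phi \xi_\phi^{-1}(h)e_\phi.h$, and that under the isomorphism $kHe\to \Mat_m(k)\times\dots\times\Mat_m(k)$ these elements map to the diagonal tuples $(T_{\phi_0}(h),\dots,T_{\phi_0}(h))$; hence $\fM\cong\Mat_m(k)$, with $e$ as its identity element. The preceding theorem says multiplication induces an isomorphism $\fA\otimes_k\fM\xrightarrow{\ \sim\ }kGe$, so $kGe\cong\fA\otimes_k\Mat_m(k)\cong\Mat_m(\fA)$.

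It remains to see that $\fA$ is a basic algebra, so that it deserves to be called \emph{the} basic algebra of $kGe$. By Theorem~\ref{th:A} we have $\fA\cong kQ/I$ with $I\le J^2(kQ)$; therefore $J(\fA)=J(kQ)/I$ and $\fA/J(\fA)\cong kQ/J(kQ)$, which is a product of $|Z(H):Z|$ copies of $k$. A finite-dimensional algebra with split semisimple quotient of this form is basic, and since $kGe\cong\Mat_m(\fA)$ and forming matrix algebras preserves the Morita class, $\fA$ is indeed the basic algebra of $kGe$. Finally $\Mat_m(\fA)$ is Morita equivalent to $\fA$ via the progenerator $\fA^m$ of column vectors, whence $kGe$ is Morita equivalent to $\fA$.

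I do not anticipate any real difficulty here: all the substantive work — producing the quiver and relations for $\fA$, proving that $\fA\otimes_k\fM\to kGe$ is an isomorphism, and recognising $\fM$ as a matrix algebra — has already been carried out in the preceding lemmas and theorems. The only step meriting an explicit sentence is the verification that $\fA/J(\fA)$ is split, equivalently that $\fA$ is genuinely basic rather than merely Morita equivalent to $kGe$, which is immediate from the quiver presentation in Theorem~\ref{th:A}.
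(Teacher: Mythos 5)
Your argument is correct and is exactly the intended one: the paper treats the corollary as immediate (it carries a \qed in the statement) from the preceding theorem $\fA\otimes_k\fM\cong kGe$, the identification of $\fM$ with $\Mat_m(k)$, and the quiver presentation of $\fA$ in Theorem~\ref{th:A} showing it is basic. You have simply spelled out these same steps, so there is nothing to add.
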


\bibliographystyle{amsplain}
\bibliography{../repcoh}

\newcommand{\noopsort}[1]{}
\providecommand{\bysame}{\leavevmode\hbox to3em{\hrulefill}\thinspace}
\providecommand{\MR}{\relax\ifhmode\unskip\space\fi MR }
\providecommand{\MRhref}[2]{%
  \href{http://www.ams.org/mathscinet-getitem?mr=#1}{#2}
}
\providecommand{\href}[2]{#2}
\begin{thebibliography}{1}

\bibitem{Bass:1967a}
H.~Bass, \emph{{Lectures on topics in algebraic $K$-theory}}, Tata Institute,
  Bombay, 1967.

\bibitem{Benson/Green:2004a}
D.~J. Benson and E.~Green, \emph{{Nonprincipal blocks with one simple module}},
  Quarterly J.\ Math (Oxford) \textbf{55} (2004), 1--11.

\bibitem{Benson/Kessar:2007a}
D.~J. Benson and R.~Kessar, \emph{{Blocks inequivalent to their Frobenius
  twists}}, J.~Algebra \textbf{315} (2007), 588--599.

\bibitem{EatonLivesey:2018a} C.~W. Eaton and M.~Livesey, 
\emph{{Donovan’s conjecture and blocks with abelian defect groups,}}
arXiv:1803.03539v1 (2018). 

\bibitem{EatonLivesey:2018b} C.~W. Eaton and M.~Livesey, 
\emph{{Towards Donovan's conjecture for abelian defect groups}},
arXiv:1711.05357v2 (2018).

\bibitem{Gorenstein:1968a}
D.~Gorenstein, \emph{{Finite groups}}, Harper \& Row, 1968.

\bibitem{Holloway/Kessar:2005a}
M.~Holloway and R.~Kessar, \emph{{Quantum complete rings and blocks with one
  simple module}}, Quarterly J.\ Math (Oxford) \textbf{56} (2005), 209--221.

\bibitem{Kessar:2004} 
R.~Kessar, \emph{{A remark on Donovan's
conjecture}}, Archiv Math. (Basel) {\textbf 82} (2004),
391--394.


\bibitem{Kulshammer:1985a}
B.~K\"ulshammer, \emph{{Crossed products and blocks with normal defect
  groups}}, Commun.\ in Algebra \textbf{13} (1985), no.~1, 147--168.

\bibitem{Thevenaz:1995a}
J.~Th\'evenaz, \emph{{$G$-algebras and modular representation theory}}, Oxford
  University Press, 1995.

\end{thebibliography}

\end{document}